\definecolor{labelkey}{gray}{.8}
\definecolor{refkey}{gray}{.8}
\definecolor{darkblue}{rgb}{0,0,0.7} 
\definecolor{darkred}{rgb}{0.9,0.1,0.1}
\definecolor{darkgreen}{rgb}{0,0.5,0}
\renewenvironment{proof}[1][\smallskip\noindent\proofname]{{\smallskip\noindent\bfseries #1. }}{\qed \medskip}
\newtheorem{thm}{Theorem}[section]
\newtheorem{theorem}[thm]{Theorem}
\newtheorem{prop}[thm]{Proposition}
\newtheorem{lem}[thm]{Lemma}
\theoremstyle{remark}
\newtheorem{rem}[thm]{Remark}
\theoremstyle{definition}
 \renewcommand{\skew}{\mathrm{skew}}
\renewcommand{\leq}{\leqslant}
\renewcommand{\geq}{\geqslant}
\renewcommand{\subset}{\subseteq}
\newcommand{\B}{\mathcal{B}}
\newcommand{\E}{\mathbb{E}}
\newcommand{\mP}{\mathcal{P}}
\newcommand{\N}{\mathbb{N}}
\newcommand{\1}{\mathbf{1}}
\newcommand{\R}{\mathbb{R}}
\renewcommand{\P}{\mathbb{P}}
\renewcommand{\S}{\mathbb{S}}
\newcommand{\eps}{\varepsilon}
\newcommand{\e}{\varepsilon}
\renewcommand{\d}{{\mathrm{d}}}
\renewcommand{\O}{\mathcal{O}}
\newcommand{\dd}{\, \mathrm{d}}
\newcommand{\dmin}{d_{\mathrm{min}}}
\newcommand{\mR}{\mathcal R}
\newcommand{\mB}{\mathcal B}
\renewcommand{\varrho}{\rho}
\newcommand{\spinstate}{\{\pm1\}}
\DeclareMathOperator{\sym}{sym}
\DeclareMathOperator{\curl}{curl}
\DeclareMathOperator{\Id}{Id}
\DeclareMathOperator{\dv}{div}
\DeclareMathOperator*{\supp}{supp}
\DeclareMathOperator{\Bog}{Bog}
\DeclareMathOperator{\diam}{diam}
\DeclareMathOperator{\Cov}{Cov}
\numberwithin{equation}{section}
\begin{document}

\title{\LARGE Derivation of a Multiscale Ferrofluid Model: Superparamagnetic Behavior due to Fast Spin Flip}
\author[1]{Alexandre Girodoux-Lavigne\thanks{girodroux@imj-prg.fr}}
\author[1]{Richard M. H\"ofer\thanks{richard.hoefer@ur.de}}
\affil[1]{Faculty of Mathematics, University of Regensburg, Germany}

\maketitle

\begin{abstract}
   We consider a microscopic model of $N$  magnetic nanoparticles in a Stokes flow. We assume that the temperature is above the critical N\'eel temperature such that the particles' magnetizations undergo random flip with rate $1/\eps$. The  microscopic system is the modeled through a piecewise deterministic Markov jump process.
      We show that for large $N$, small particle volume fraction and small $\eps$, the system can be effectively described by a multiscale model.
\end{abstract}

\tableofcontents

\section{Introduction}
Ferrofluids are artificial suspensions of magnetic nanoparticles in incompressible  liquids. They can be transported in a controlled way through externally applied magnetic fields.
This effect has various technological and medical applications ranging from cooling in loudspeakers to magnetic drug targeting. 
% In the literature liquids carrying magnetic nanoparticles are often labeled ferrofluids, cf. \cite{Odenbach}.

%Ferrofluids are dilute suspension that also contain surfactants attached to the magnetic nanoparticles that avoid agglomeration of the particles.
Ferrofluids are superparamagnetic: the magnetization increases when the external magnetic field is increased. Moreover, once the external magnetic field is turned off, the effective magnetization fades rapidly. This effect results either from  rotational Brownian motion that quickly relaxes the ordering of the particle orientation, or from spontaneous spin flip or reorientation that changes the magnetic dipoles without changing the physical orientation of the particles. Which of these relaxation mechanisms is dominant depends on the temperature and on  microscopic parameters of the ferrofluid.

Ferrofluids, like other suspensions such as dilute polymer solutions,  active suspensions or liquid crystals, can be modeled on different scales: either by macroscopic  fluid models,
or by so-called multiscale kinetic models, or else on the microscopic scale as suspended particles moving in a fluid flow.
\begin{itemize}
\item 
A typical \emph{macroscopic} model consists of the incompressible Navier--Stokes equations, with sources and stresses that are coupled to the particle concentration and -- in the case of magnetic particles -- to the magnetization. In macroscopic ferrofluid models, the  magnetization plays a similar role as the director field in the Leslie-Ericksen model for liquid crystals.  Various phenomenological macroscopic models  have been proposed and analyzed in the literature of ferrofluids.

Many models assume the particle distribution to be homogeneous in space (and consequently constant in time). For those ferrofluids, macroscopic PDE models have been proposed by Shliomis \cite{Shliomis02} and Rosensweig \cite{Rosensweig02}. Both models couple evolution equations for momentum and magnetization to Maxwell's equations or to their simplifications from magnetostatics. One can distinguish models where the internal magnetization $M$ is assumed to be parallel to the magnetic field and models where $M$ is determined though a PDE (so called models with \enquote{internal rotation}). Mathematically, these models have been analyzed for instance in \cite{AmiratHamdache08, AmiratHamdache16, NochettoSalgadoTomas16}.

Macroscopic models with inhomogeneous particle density have  been suggested and investigated. In one approach, the magnetization is a function of density and the external magnetic field \cite{PolevikovTobiska08, HimmelsbachNeuss-RaduNeuss18}. In a second approach,  Onsager's variational principle is used  to derive a thermodynamically consistent model which couples PDEs for the fluid velocity, particle density, magnetic field and magnetization \cite{GruenWeiss19, GruenWeiss21}.

\item
  So-called \emph{multiscale} or \emph{micro-macro} models go one step away from the pure phenomenology towards a microscopically more accurate description of particle suspensions. 
The macroscopic fluid is then coupled via the stresses and sources to a kinetic evolution equation for the  suspended solid phase.
The latter is modeled by a particle density function $f(t,x,\xi) \in \R^+$ at time $t$, where~$x$ is the position of particles and~$\xi \in \mathbb S^2$ is their (physical or magnetic) orientation. 
Such models describe how the microscopic state of the particles adapts collectively to local fluid deformations and how the macroscopic fluid flow gets itself effectively impacted.
Popular models include the kinetic FENE and Hookean dumbbell models for dilute suspensions of flexible polymers, the  Doi-(Onsager) model for suspensions of Brownian rigid rod-like particles,
and the Doi--Saintillan--Shelley for corresponding active (self-propelled) particles.
A multiscale model for ferrofluids with space-time homogeneous particle density has been proposed in \cite{ShenDoi90}. The  magnetization of the nanoparticles is modeled by a Fokker-Planck equation which is coupled to the momentum equation of the fluid.  
A related model for magnetoactive suspensions (magnetic bacteria) has been proposed in \cite{alonso2018microfluidic}. 
The models in \cite{ShenDoi90} and \cite{alonso2018microfluidic} account for Brownian motion but not for spin flip.
\smallskip\item 
At the  \emph{microscopic}  particle scale (more precisely the nanoscale), one can formulate a fully detailed hydrodynamic model describing the motion of suspended particles in a background fluid flow. The fluid phase is then described by the Navier--Stokes equations with no-slip conditions at the boundary of the particles and is coupled to the particle dynamics driven by Newton's equations of motion. This leads to a highly complex dynamics. 
\end{itemize}

\paragraph{Summary of the main result}
The aim of the present paper is a mathematical rigorous  derivation of an effective multiscale ferrofluid model starting from a microscopic description. 
Specifically, we are interested in the rigorous derivation of superparamagnetic behavior due to spontaneous spin flip of the nanoparticles, which is the dominant relaxation mechanism for sufficiently small particles (below the so-called Shliomis diameter)  or sufficiently high temperature (above the N\'eel temperature).

To this end, we consider a simplified model where we keep only the most relevant aspects that lead to this superparamagnetic behavior and to effective terms in the fluid equation. We refer to Section \ref{subsection Microscopic problem} for the precise description of the model. We model the fluid flow by the incompressible Stokes equations in $\R^3$ outside of $N$ identical rotationally symmetric particles where we impose  no-slip boundary conditions. The inertia of the particles is neglected such that the particle translational and angular velocities are determined through the fluid PDE and balance equations for the torque and force on each particle.
We neglect Brownian forces and torques as well as the internal magnetic field that one would obtain through the solution of magnetostatic equations. Hence the forces and torques on the particles are the sum of viscous ones and those determined by the external magnetic field, the particle orientation and its spin state.
As another simplification, we artificially freeze the time evolution of the particle centers and only allow them to rotate. 
We then couple the (piecewise) deterministic evolution of the particle orientations  to stochastic jumps of the particle spins in $\spinstate$ that occur with rate of order $1/\eps$ for each particle. Spin flips towards the energetically preferred spin that reduce the angle to $H$ occur more frequently than towards the opposite direction. 
The whole process is given in terms of a piecewise deterministic Markov jump process. 

We then show that for large $N$,  small $\eps$ (i.e. high spin flip rate) and small particle volume fraction $\phi$, the fluid velocity and the empirical particle density can be effectively described by a multiscale model. 
The effective fluid velocity solves the Stokes equations with a particle density dependent Kelvin force and stress term of order $\phi$. 
On the other hand, the particle density is in local equilibrium regarding the spins, i.e. the average spin of particles with orientation close to $\zeta$ and position close to $x$ is given by $\tanh \big( b \, H(t,x) \cdot \zeta \big)$ for some parameter $b > 0$.
Regarding the orientations,  the particles behave -- to leading order in $\phi$ -- as if each particle was isolated in the fluid with a magnetic torque acting on it and if their spin was the average spin at equilibrium. 
The precise result, including quantitative estimates, is stated in Theorem \ref{main theorem}. We further discuss the result, tis limitations and  possible generalizations after the statement of Theorem \ref{main theorem}.

\paragraph{Comparison with the literature}

The mathematical literature on the derivation of effective equations for suspensions is vast. We only discuss here some results that concern suspensions where the particle inertia is neglected and where the carrier fluid is non-magnetic.

 Suspensions of force-free but not torque-free particles are considered in \cite{Batchelor70torque}. The particle positions as well as the torques are prescribed. The author then formally derives a formula for the effective stress for dilute suspensions. 
 In \cite{LevyHsieh88}, the homogenization of the Navier-Stokes equations outside of periodically distributed magnetic particles is considered. The particle positions and orientations are prescribed and fixed and the magnetization and the magnetic field are assumed to be given as well. By means of a formal two-scale expansion, effective equations are obtained in the homogenization limit.

In \cite{NikaVernescu20, DangGorbBolanos21generalized, DangGorbBolanos21SIMA, DangGorbBolanos23}, the authors consider models of period suspensions where (Navier)-Stokes equations are coupled to the Maxwell equations. In the homogenization limit they rigorously obtain effective coupled systems. As in the works mentioned before, the problem is static. The magnetization varies inside of the individual particles. This and the non-dilute setting makes the considered problem more relevant for magnetorheological fluids, where the particles are larger than in ferrofluids.

Even suspensions of force- and torque-free particles are well known to give rise to an effective viscosity of the fluid, which is of the order of the particle volume fraction $\phi$ for small $\phi$. We do not give an exhaustive discussion of the rigorous results on this topic but refer the reader to \cite{DuerinckxGloria23} and the references therein.
Most of these results concern static problems where the particle positions (and orientations in the case of nonspherical particles) are prescribed.
The particle evolution of inertialess suspensions has been investigated in \cite{Hofer&Schubert, Hofer18MeanField, Mecherbet19, HoeferLeocataMecherbet22, HoferMecherbetSchubert22, Gerard-VaretHoefer24, Duerinckx23}.
In \cite{Hofer18MeanField, Mecherbet19}, the transport-Stokes equation has been derived for spherical buoyant  particles. The increase of viscosity, which is a higher order effect in this setting, has been investigated in \cite{Hofer&Schubert}. In \cite{HoeferLeocataMecherbet22}, the authors considered a simplified model of a Brownian nonspherical suspension and derived the effective elastic stress on the fluid. In \cite{HoferMecherbetSchubert22, Gerard-VaretHoefer24, Duerinckx23} the derivation of effective fully coupled multiscale models for dilute non-spherical non-Brownian suspensions is studied. 
In particular, it has been pointed out that the \enquote{naive} mean-field limit does not capture the evolution of the particle orientations to order $O(\phi)$.

Compared to the aforementioned result, the main novelty of the present work arises from the spin flip of the particles. In particular, the main difficulty is to make rigorous that both local time averages of the spin of individual particles as well as  local phase space averages of  spins of particles are well approximated by the local equilibrium law $\tanh \big( b \, H(t,x) \cdot \zeta \big)$. We comment on the strategy of the proof after the statement of Theorem \ref{main theorem}.

\subsection{Modeling of the microscopic problem}
\label{subsection Microscopic problem}

In the following, we introduce the microscopic model that serves as the starting point of the analysis. For more details on the modeling, in particular also the simplification  we make,  the chosen nondimensionalization and a short discussion of typical orders of magnitudes, we refer to Appendix \ref{sec:app}.

\paragraph*{Particle and fluid domain}
% We consider $N$ particles $\B_i \Subset \R^3$, $1 \leq i \leq N$. Given a reference particle $\mathcal B$, each particle $\B_i$ is characterized in terms of the common rescaling parameter $r$, its center $X_i \in \R^3$, its orientation $\xi_i \in \S^2$ and its spin $\sigma_i \in \{\pm 1\}$ (all depending on $N$). 

Let $\mathcal B \subset B_1(0)$ be a fixed reference particle, a smooth compact and connected set with rotational symmetry, i.e.
\begin{align}\label{rod_reference}
	R \B = \B \quad \text{for  all } R \in SO(3) \text{ with } R e_3 =e_3,
\end{align}
where $e_3 = (0,0,1) \in \R^3$.

We consider $N \in \N$ particles with centers $X_i \in \R^3$ orientations $\xi_i \in \S^2$, $1 \leq i \leq N$.
We choose a rotation matrix $R{\xi_i} \in SO(3)$  such that $R_{\xi_i} e_3 = \xi_i$.
For a scaling parameter $r>0$, we then denote the $i$-th particle by
\begin{align}
    \B_i := X_i + r R_{\xi_i} \B.
\end{align}
% We denote the fluid domain $\Omega_f := \R^3\setminus \bigcup_i \B_i$ and the solid domain $\Omega_s :=  \bigcup_i \B_i$. 

For a given vector $\sigma \in \spinstate^N$, the $i$-th element $\sigma_i = \pm 1$ corresponds to the spin of the $i$-th particle. 

\paragraph*{Fluid equation}
Given an external magnetic field $H(t,x)$, and $(X(t),\sigma(t),\xi(t)) \in \R^{3N} \times (\S^2)^N \times \spinstate^N$, the fluid velocity $u_N$ is given as the unique solution in $u_N(t,\cdot) \in \dot H^1(\R^3)$ to  the Stokes equations with balance of forces and torques
\begin{align} \label{u_N}
\left\{\begin{array}{rl}
    -   \Delta u_N + \nabla p_N = 0,  \quad \dv u_N = 0 &\quad \text{in }  \R^3\setminus \bigcup_i \B_i, \\
     D u_N = 0 &\quad \text{in }  \bigcup_i \B_i, \\
        -\int_{\partial \B_i} \Sigma[u_N,p_N^{\sigma, \xi}] n \dd x  =  \sigma_i  r^3 |\B|   (\nabla  H)^T \xi_i  &\quad \forall i, \\
    -\int_{\partial \B_i} (x-X_i)   \times \Sigma[u_N,p_N^{\sigma, \xi}] n  \dd x  =  \sigma_{i} r^3 |\B|    \xi_i  \times  H(t,X_i)  &\quad \forall i.
    \end{array} \right. 
\end{align} 
Here, $\dot H^1(\R^3) = \{ v \in L^{6}(\R^3) : \nabla v \in L^2(\R^3)\}$  is a homogeneous Sobolev space, $\Sigma[u,p] = 2 D u - p \Id $, where $D u = \tfrac 1 2 (\nabla u  + (\nabla u)^T)$ is the symmetric gradient, and $n$ denotes the outer unit normal. 

\paragraph{Particle evolution}
We artificially fix the particle positions $X_i$, $1\leq i \leq N$, and study the evolution of the particle orientations and spins coupled to the fluid flow.
The evolution combines a deterministic motion of the orientations and random jumps of the spins. For simplicity, we consider deterministic initial data $(\xi^0,\sigma^0) \in (\S^2)^N \times \spinstate^N$. All results can be adapted to random initial data.

The spins of the particles flip according to the following time continuous Markov chain. For each particle $\B_i$, the probability for the spin $\sigma_i(t)$  to switch between $t$ and $t + \mathrm{d}t$ is given by 
\begin{align}
\label{def:intensities lambda}
\begin{aligned}
    \frac{1}{\e} \lambda_i(t, X,\xi(t), \sigma(t)) \dd t &= \frac 1 \eps \lambda^{\sigma_i(t)}(t, X_i,\xi_i(t)) \dd t, \\
    \lambda^{\pm}(t,x,\zeta) &=\exp\left(\mp  b H(t,x) \cdot \zeta \right),
    \end{aligned}
\end{align}
where $\eps > 0$ and $b>0$ are two constants related to physical parameters. We denote $t^{(0)} = 0$ and $t^{(k)}$ the time of the $k-th$ jump (of the spin of any of the $N$ particles). 
Between successive jumps, $t \in [t^{(k)}, t^{(k+1)}]$, the evolution of the orientations $(\xi_i)_i$ is given through the ODEs
       \begin{align} \label{ODE.Xi}
        \left\{ \begin{array}{l}
              \dot \xi_i(t) =  \frac 1 2 \curl u_N(t,X_i) \times  \xi_i(t)  = \nabla u_N(t, X_i)   \xi_i=: V(t, \xi(t),\sigma(t)) , \\
              \xi_i(t^{(k)}) = \xi_i^0.
              \end{array}\right.
\end{align}
Due to the quasi-stationarity of the fluid equation, $V$ only depends on time through the time dependence of the magnetic field $H$. Note that $V$ also implicitly depends on the fixed particle positions.
% , we can rewrite this equation as
% \begin{align} \label{V}
%     \dot \xi = V(t, \xi,\sigma), \\
%     V(t, \xi,\sigma) = \frac 1 2 \curl u_N(t,X_i) \times  \xi_i(t)
% \end{align}
% for a suitable vector field $V$ (which implicitly also depends on the fixed particle positions).
Let $\Phi \colon \R \times \R \times (\S^2)^N \times \{\pm1\}^N \to (\S^2)^N \times \{\pm1\}^N$ be the associated flow, i.e.
\begin{align}
    \partial_s \Phi(s;t,\bar \xi, \bar \sigma) =  (V(s,\Phi(s;t,\bar \xi, \bar \sigma)),0), \qquad \Phi(t;t,\bar \xi,\bar \sigma) = (\bar \xi, \bar \sigma).
\end{align}
Note that this flow is well-defined since $V$ is Lipschitz continuous with respect to $\bar \xi$.

More precisely, one can construct the stochastic process $Z(t) = (\xi(t),\sigma(t))$ as follows. It suffices to construct the jump times $t^{(k)} \in [0,\infty)$ and the states at the jump times  $Y^{(k)} = (\xi^{(k)},\sigma^{(k)}) \in (\S^2)^N \times \{\pm1\}^N$. Then, the stochastic process is defined via
\begin{align}
    Z(t) = \Phi(t;t^{(k)},Y^{(k)}) \quad \text{for } t \in [t^{(k)},t^{(k+1)}). 
\end{align}
The jump times and states are inductively defined as follows, starting from the deterministic  initial configuration $(\xi^0,\sigma^0) \in (\S^2)^N \times \spinstate^N$.
\begin{itemize}
    \item  Set $t^{(0)} = 0$, $Y^0 = (\xi^0,\sigma^0)$, sample $E^{(k)}_{j} \sim \mathrm{Exp}(1)$, $k \in \N, 1 \leq j \leq N$, independently.
    \item Assume $t^{(l)}, Y^{(l)}$, $1 \leq l \leq k$ are constructed.  Then define
    \begin{align}
        t^{(k+1)} := \inf \left\{ t > t^{(k)} : \exists \, 1 \leq j \leq N   \text{ s.t. } \frac 1 \eps \int_{t^{(k)}}^t \lambda_j(s,\Phi(s;t^{(k)},Y^{(k)})) \dd s \geq  E^{(k+1)}_{j} \right\}
    \end{align}
    and let $1 \leq j_{flip} \leq N$ be  minimal such that $\int_{t^{(k)}}^{t^{(k+1)}} \lambda_{j_{flip}}(s,\Phi(s;t^{(k)},Y^{(k)})) \dd s \geq  E^{(k+1)}_{j_{flip}}$.
    Set 
    \begin{align}
        Y^{(k+1)} = \tau_{j_{flip}} \Phi(t^{(k+1)};t^{(k)},Y^{(k)}),
    \end{align}
    where $\tau_j (\bar \xi, \bar \sigma) =  (\bar \xi,\tau_j \bar \sigma)$ is the flip operator, defined through 
    \begin{align}
        (\tau_j \bar \sigma)_i = (-1)^{\delta_{ij}} \bar \sigma_i.
    \end{align}
\end{itemize}

\paragraph{PDE formulation}

The process $Z(t)$ defined like this is an (inhomogeneous) continuous-time Markov process, a so called piecewise deterministic Markov process.  We refer to \cite{Davis} for an introduction into such processes. 
In particular, the process $Z(t)$ is characterized in terms of its generator that leads to the following PDE formulation.
For $N>0$,  $T>0$, 
    let $p_{\e} =  p_\e(t,\bar \xi ,\bar \sigma) \in C^0_w([0,T], \mathcal{P}({(\S^2)^N} \times \{\pm 1\}^N))$ be the unique distributional solution of the system
    \begin{align} \label{generator micro system}
    \left\{\begin{array}{rl}
        \partial_t p_\e + \dv_{\bar \xi} (V p_\e) = \frac 1 \eps \sum_{i=1}^N \left(\tau_i(\lambda_i   p_\e) -  \lambda_i  p_\e \right) & \quad \text{in } (0,\infty) \times ( \S^2)^N \times \{\pm 1\}^N,\\
        p_\eps(0,\cdot)  = \delta_{(\xi_i^0, \sigma^0)} & \quad \text{in }  ( \S^2)^N \times \{\pm 1\}^N.
        \end{array} \right.
    \end{align}
Here, $C^0_w([0,T], \mathcal{P}({(\S^2)^N} \times \{\pm 1\}^N))$ denotes the space of weakly continuous functions $t \mapsto \mathcal{P}({(\S^2)^N} \times \{\pm 1\}^N)$. Moreover, the flip operator acts on functions via 
\begin{align} \label{tau_i.operator}
    \tau_i f(\cdot,\bar \sigma) = f(\cdot,\tau_i \bar \sigma).
\end{align}

Then, $p_\eps(t,\cdot)$ is the probability distribution of the piecewise deterministic Markov process process $Z_t$ that starts from the same initial distribution. More precisely, for all $\varphi \in C({(\S^2)^N} \times \{\pm 1\}^N)$
\begin{align}
    \E[\varphi(Z(t))] = \int_{{(\S^2)^N} \times \{\pm 1\}^N} \varphi \dd p_\e(t,\cdot).
\end{align}
In the following, we will only rely on this PDE formulation of the stochastic process.

\subsection{Statement of the main result} \label{sec:mainResult}

\paragraph{Assumptions on the particle positions}
We assume that the particle centers $X \in \R^{3N}$ are contained a compact set, uniformly in $N$, i.e., 
\begin{align} \label{K} \tag{H1}
    \exists K \subset \R^3 \text{ compact } ~ \forall 1 \leq i \leq N \quad  X_i \in K.
\end{align}

We enforce that the particles do not overlap by assuming
\begin{align}
    \label{ass:non.overlap} \tag{H2}
    \exists \, \theta > 2 ~  \quad \underset{i \neq j}{\min} |X_i - X_j| \geq 2 \theta r.
\end{align}

Moreover, we assume the following strong separation condition
\begin{equation}
    \label{assumption separation} \tag{H3}
    \exists \, c_0 > 0  \quad \underset{i \neq j}{\min} |X_i - X_j| \geq c_0 N^{- \frac{1}{3}}.
\end{equation}

We remark that \eqref{K} and \eqref{ass:non.overlap} imply an upper bound for $r$ (for $N \geq 2$). 
Moreover, if the \enquote{volume fraction} $\phi := Nr^3$ is sufficiently small, \eqref{assumption separation} implies \eqref{ass:non.overlap}.

\paragraph{Effective equations}
Our goal is to characterize the effective behavior of the fluid flow as well as the particle dynamics. The latter we will capture through the particle density. More precisely, given initial particle orientations and spins  $(\xi^0, \sigma^0) \in (\S^2)^N \times \spinstate^N$, we consider their evolution by the piecewise deterministic Markov process above.
We denote by $h_N(t,\cdot) \in \mP(\R^3 \times \S^2 \times \spinstate)$ and $f_N(t,\cdot) \in \mP(\R^3 \times \S^2 ) $ the empirical densities
\begin{align}
    h_N(t, \d x, \d\zeta, \d \varsigma,) &= \frac 1 N \sum_{i = 1}^N \delta_{(X_i,\xi_i,\sigma_i)}, \\
     f_N(t,  \d x, \d\zeta) &= \frac 1 N \sum_{i = 1}^N \delta_{(X_i,\xi_i)}.
\end{align}

Our main result asserts that $(h_N,u_N)$ is well approximated by the following limiting behavior.
The fast spin flip $\eps^{-1}$ drives $h_N$ towards a (local) equilibrium configuration where fraction of particles in spin states $\pm 1$ is given by
\begin{align}
        m^{\pm}(t,x,\zeta) &= \frac{\lambda^{\mp}(t,x,\zeta)}{\lambda^+(t,x,\zeta) + \lambda^-(t,x,\zeta)} \label{m_pm}.
\end{align}
Hence, effectively, we obtain
\begin{align} \label{h.eff}
    h &= f \otimes (m^+ \delta_{+1} + m^- \delta_{-1}),
\end{align}
where the slightly abusive notation $f \otimes (m_+ \delta_{+1} + m_- \delta_{-1})$ means 
\begin{align}
    \int \varphi(x,\zeta,\varsigma) \dd (f \otimes  m_{\pm} \delta_{\pm 1}) =
    \int \varphi(x,\zeta,\varsigma)  m_\pm(t,x,\zeta) f(t,\d x,\d\zeta) \delta_{\pm1} (\d\varsigma).
\end{align}
We denote the average spin in the local equilibrium by
\begin{align} \label{m^0}
    m^0(t,x,\zeta) = (m^{+}- m^-)(t,x,\zeta) = \tanh \big( b \, H(t,x) \cdot \zeta \big).
\end{align}
Then, the effective equation for $f$ is characterized by a decoupling to leading order in the particle volume fraction: the particles evolve as if they were isolated from the others,\begin{align}\label{eq:f}
\left\{\begin{array}{rl}
    \partial_t f + \dv_\zeta ( m_0 \gamma P_{\zeta^\perp} H  f) = 0 \quad  &\text{in } (0,\infty) \times \R^3 \times \S^2,\\
    f(0,\cdot) = f_0\quad  &\text{in } \R^3 \times \S^2.
\end{array} \right.
\end{align}
Here, the parameter $\gamma > 0$ is the rotational resistance of the reference particle. It is obtained by solving an exterior Stokes problem outside of the reference particle $\B$, see Section \eqref{sec:single} below. 
The initial datum $f_0 \in \mathcal{P}(\R^3 \times \S^{2})$, one should  think of as the limit of $f_N^0$ as $N \to \infty$.

Finally, the fluid velocity $u_N$ is well approximated by
the unique weak solution to 
\begin{align} \label{u.eff}
\left\{\begin{array}{rl}
    - \Delta u + \nabla p =  \phi \int_{\S^2}   m_0 (\nabla H)^T \zeta f \dd \zeta + \phi \dv \int_{\S^2} m_0 ((\Id + \mR) (\zeta \wedge H) f \dd \zeta, \quad  &\text{in } (0,\infty) \times \R^3, \\
    \dv u = 0 \quad  &\text{in } (0,\infty) \times \R^3.    
\end{array} \right. \quad
\end{align}
Here $a \wedge b$ denotes the skewsymmetric part of the tensor product, i.e. $a \wedge b = \tfrac 1 2 (a \otimes b - b \otimes a) $, the tensor $\mR = \mR(\zeta) \in \mathcal L(\skew(3),\sym_0(3))$ is defined in \eqref{mR} below and 
\begin{align} \label{phi}
    \phi = N r^3 |\B|.
\end{align}

The precise formulation of our main result is the following.
\begin{theorem}
\label{main theorem}
Let $H \in W^{2,\infty}((0,T) \times \R^3))$. For parameters $N \in \N$, $r> 0$, $0 < \eps < 1$, let $(X, \xi^0, \sigma^0) \in (\R^3 \times \S^2 \times \spinstate)^N$ be initial particle configurations satisfying \eqref{K}--\eqref{assumption separation}.
Let $u_N(t)$, $(\xi_i(t))_{1 \leq i \leq N}$ and $(\sigma_i(t))_{ 1 \leq i \leq N}$ the  stochastic process defined in Section \ref{subsection Microscopic problem} with particle centers.

Let $f_0 \in \mathcal{P}(\R^3 \times \S^{2})$, and let $f, h, u $
be defined through \eqref{eq:f}, \eqref{h.eff} and \eqref{u.eff}.

Then, for all $T>0$ there exists $C >0$ depending only on $T$, the reference particle $\B$, the set $K$ from \eqref{K}, the constants $c_0$ and $\theta$ from \eqref{ass:non.overlap}, \eqref{assumption separation} and on $\|H\|_{W^{2,\infty}((0,T) \times \R^3))}$ such that forall $t \in [0,T]$, all $p \in (1,3/2)$ and all $x \in \R^3$
% $\bullet$ There exists a constant $C_1>0$ such that for all $t \in [0,T]$, for large enough $N$, we have for small enough $\phi$:
\begin{align}\label{f_N,f}
    \mathbb{E} \big[W_2^2(f_N(t), f(t))\big] &\leq  \Big(C\big(\eps  + \phi^2 \big) t + W_2^2(f_N^0, f^0) \Big) e^{C t}, \\
% $\bullet$ 
% Let $h_N(t):= \tfrac{1}{N} \sum_{i=1}^N \delta_{(X_i,\xi_i(t),\sigma_i(t))}$ and $g(t):= f(t,x,\xi) \otimes \big(\tfrac{\lambda^+(t,x,\xi)}{\lambda^+(t,x,\xi) + \lambda^-(t,x,\xi)} \delta_{\sigma = 1} +\tfrac{\lambda^-(t,x,\xi)}{\lambda^+(t,x,\xi) + \lambda^-(t,x,\xi)} \delta_{\sigma = -1}\big)$ be two probability measures in $\mathcal{P}(\R^3 \times \S^2 \times  \{\pm 1\})$. There exists a constant $C_2>0$ such that for all $t \in [0,T]$, for large enough $N$, we have for small enough $\phi$:
% \begin{align*}
    \mathbb{E} \big[W_2^2( h_N(t), h(t) \big] &\leq C \left( N^{-\frac 2 9} +  \eps^{\frac 1 4}  + \phi ^{\frac 1 2}  + W_2(f_N^0, f^0) + e^{\frac{-t}{C\eps}} \right)  \label{h_N,h},\\
% \end{align}
% $\bullet$ For any $t$, let $u(t) \in \dot{H}^1(\R^3)$ be the variational solution to 
% \begin{align*}
%     - \Delta u + \nabla p &=  \phi \int_{\S^2}   m_0(\xi) \xi \cdot \nabla H f \dd \xi + \phi \dv \int_{\S^2} m_0(\xi) \xi \wedge H f \dd \xi,\\
%     \dv u &= 0,
% \end{align}
% Regarding the flow $u_N$, we have for any $t \in [0,T]$ and $1 < p < 3/2 $
% \begin{align}
\label{u_N.u}
  \frac 1  \phi \|u_N(t,\cdot) - u(t,\cdot)\|_{L^p(B_1(x))} &\leq C \left(\phi^{\frac 1 2} + r^{\frac 3 p - 2} + W^{\frac 3 p - 2}_2(h_N,h) \right) .
\end{align}
\end{theorem}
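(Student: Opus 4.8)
The plan is to combine three ingredients: (i) a quantitative stability/propagation-of-chaos argument for the particle orientations via a Wasserstein-type coupling, (ii) a relaxation estimate showing the empirical spin distribution converges, on the timescale $\eps$, to the local equilibrium $m^0$, and (iii) a quantitative method-of-reflections expansion for the fluid velocity $u_N$ in the dilute regime $\phi \ll 1$. These three pieces are essentially decoupled at leading order, which mirrors the statement of the theorem: \eqref{f_N,f} isolates the orientation dynamics, \eqref{h_N,h} adds the spin equilibration, and \eqref{u_N.u} converts a Wasserstein bound on $h_N-h$ into an $L^p$ bound on the velocity.

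\textbf{Step 1 (single-particle analysis and effective torque).} First I would analyze the exterior Stokes problem \eqref{u_N} for a single particle and extract the constants $\gamma$ and the tensor $\mR$ appearing in \eqref{eq:f} and \eqref{u.eff}; this is the content of the section referenced as \enquote{Section \ref{sec:single}}. The key quantitative point is that, for $N$ particles, the true angular velocity of particle $i$ differs from the isolated-particle angular velocity $m^0 \gamma P_{\xi_i^\perp} H(X_i)$ by an error of order $\phi$ in an appropriate norm: this is exactly the \enquote{method of reflections}/naive-mean-field correction discussed in \cite{HoferMecherbetSchubert22, Gerard-VaretHoefer24, Duerinckx23}. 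Using \eqref{ass:non.overlap}, \eqref{assumption separation} one gets summable interaction kernels (Stokeslet/rotlet decay) and hence an error $O(\phi)$ — actually one needs $O(\phi^2)$ in the drift for \eqref{f_N,f}, which should come from the fact that the leading $O(\phi)$ reflection term, when averaged against the density, vanishes or is already accounted for, while the fluctuation contributes at the next order. I'd be careful here: the $\phi^2$ rate in \eqref{f_N,f} versus the $\phi^{1/2}$ in \eqref{h_N,h} suggests the orientation equation is more robust than the coupled spin-orientation system.

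\textbf{Step 2 (coupling for $W_2(f_N,f)$).} For \eqref{f_N,f} I'd build the standard synchronous coupling: evolve $N$ \enquote{ideal} particles $\bar\xi_i$ under the closed ODE $\dot{\bar\xi}_i = \overline{m^0}\,\gamma P_{\bar\xi_i^\perp}H(\bar X_i)$ with $\bar\xi_i(0)$ chosen from an optimal coupling of $f_N^0$ and $f^0$, and compare with the true $\xi_i$. Differentiating $\frac1N\sum_i \E|\xi_i - \bar\xi_i|^2$, one gets a Grönwall inequality: the Lipschitz-in-$\zeta$ dependence of the drift gives the $e^{Ct}$ factor; the forcing terms are (a) the $O(\phi)$ reflection error from Step 1, squared or refined to $O(\phi^2)$ after exploiting cancellation, and (b) the replacement of the instantaneous spin $\sigma_i(t)$ by its local equilibrium value $m^0(X_i,\xi_i)$. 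For (b) the crucial estimate is that the time-average of $\sigma_i(s) - m^0(s,X_i,\xi_i(s))$ over an interval of length $\gg\eps$ is $O(\eps)$ in $L^2(\P)$; this follows from an ergodic/averaging estimate for the two-state Markov chain with rates $\eps^{-1}\lambda^\pm$, whose invariant measure has mean exactly $m^0$ by \eqref{m_pm}--\eqref{m^0}, combined with exponential mixing at rate $(\lambda^++\lambda^-)/\eps \gtrsim 1/\eps$. Integrating against the $O(1)$-Lipschitz drift converts this into an $O(\eps)$ contribution to the Grönwall forcing, yielding the $C(\eps+\phi^2)t$ term.

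\textbf{Step 3 (spin equilibration, the bound \eqref{h_N,h}).} Here one needs more than time-averaging: one needs that at each fixed time $t$ the \emph{joint} empirical law $h_N(t)$ is close to $f_N(t)\otimes(m^+\delta_{+1}+m^-\delta_{-1})$. I would estimate $\E[W_2^2(h_N(t),h(t))]$ by splitting $h(t) = f(t)\otimes(\dots)$, using $W_2(h_N,h)\le W_2(h_N, f_N\otimes(\dots)) + W_2(f_N\otimes(\dots), f\otimes(\dots))$; the second term is controlled by \eqref{f_N,f} (giving the $\phi$ and $\eps^{1/2}$ and $W_2(f_N^0,f^0)$ contributions after taking roots, which explains the loss from $\eps$ to $\eps^{1/4}$ — one square root from $W_2$ vs $W_2^2$, one from a further interpolation). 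For the first term, since positions and orientations are shared, $W_2$ reduces to a discrete transport on $\{\pm1\}$ and is controlled by $\frac1N\sum_i |\P(\sigma_i(t)=+1\mid \xi) - m^+(t,X_i,\xi_i(t))|$, i.e. by how close each chain is to its conditional equilibrium. The exponential relaxation of the two-state chain gives the $e^{-t/(C\eps)}$ term (memory of the deterministic initial spin $\sigma^0$); the $N^{-2/9}$ term I expect to come from a concentration/fluctuation estimate — replacing the individual probabilities by the empirical average over orientation cells of size $\sim N^{-1/3}$ forces a bias–variance tradeoff (mean-field fluctuations of size $N^{-1/2}$ against discretization of size $N^{-1/3}$, optimized to a power like $N^{-2/9}$). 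This step, reconciling the $\eps$-fast relaxation with the $N$-large averaging uniformly in the (moving) orientation, is where the real work lies and is, I expect, the main obstacle.

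\textbf{Step 4 (fluid error \eqref{u_N.u}).} Finally, for the velocity I'd write $u_N$ via the same reflection expansion: $u_N = \sum_i u_i^{(1)} + \text{(higher reflections)}$, where $u_i^{(1)}$ is the single-particle response to the prescribed force $\sigma_i r^3|\B|(\nabla H)^T\xi_i$ and torque $\sigma_i r^3|\B|\,\xi_i\times H(X_i)$. Summing the single-particle fields against $h_N$ and passing to $h$ gives the Kelvin force and stress terms in \eqref{u.eff}; the difference is exactly a Stokeslet/rotlet field integrated against $h_N - h$, and standard $L^p$ mapping properties of the Oseen tensor (with the $r^{3/p-2}$ coming from the near-field cutoff at particle scale $r$, and $W_2^{3/p-2}$ from a Kantorovich–Rubinstein-type bound in the negative Sobolev norm $W^{-1,p}\sim \dot W^{3/p-2}$ that controls Stokeslet convolution) produce the stated estimate; the remaining $\phi^{1/2}$ term absorbs the higher-order reflections, whose smallness uses \eqref{assumption separation} to sum the geometric series of reflections with ratio $\lesssim \phi$. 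Throughout, the regularity $H\in W^{2,\infty}$ is used to Lipschitz-control all drifts and to give the $C^1$ bounds on $u_N(\cdot,X_i)$ needed for \eqref{ODE.Xi}.
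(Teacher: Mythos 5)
Your overall architecture coincides with the paper's: a single-particle/dilute approximation of the angular velocities with $\ell^2$-in-$i$ error $O(\phi)$ (Proposition \ref{prop:angular.velocity}), a synchronous coupling with the decoupled ODEs \eqref{equation orientation one particle} plus Gr\"onwall for \eqref{f_N,f}, a phase-space cell decomposition with a law of large numbers for \eqref{h_N,h}, and a superposition-plus-duality argument for \eqref{u_N.u} in which the H\"older seminorm $1-3/p'=3/p-2$ of the dual test function produces exactly the stated exponents. Two clarifications on rates: the $\phi^2$ in \eqref{f_N,f} is not due to any cancellation of the leading reflection term --- it arises simply because the Gr\"onwall functional is $\E[|\xi-\xi^\ast|^2]$ and the $O(\phi)$ $\ell^2$-error in the drift enters squared (your ``squared'' alternative is the right one); and the $N^{-2/9}$ comes from optimizing $\delta^2+N^{-1/2}\delta^{-5/2}$ over cells of side $\delta$ in the five-dimensional phase space $K\times\S^2$, i.e.\ $\delta=N^{-1/9}$, not $N^{-1/3}$.

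There are two genuine gaps. First, in your Step 3 the claim of ``mean-field fluctuations of size $N^{-1/2}$'' per cell tacitly assumes the spins $\sigma_i$ are independent. They are not: the jump rates depend on the orientations $\xi_i$, which are coupled through the fluid, and all spins share the dependence on the random orientation paths. The paper must therefore prove a dedicated decorrelation estimate,
\begin{align}
|\Cov(\sigma_i,\sigma_j)|^2 \lesssim e^{-ct/\eps} + \frac1\eps\int_0^t e^{-c(t-s)/\eps}\Big(\E[|\xi_i-\xi_i^\ast|^2]+\E[|\xi_j-\xi_j^\ast|^2]\Big)\dd s,
\end{align}
namely \eqref{est.Cov}, together with the bias bound \eqref{control wasserstein microscopic p_eps} on $D_i=|\P[\sigma_i=+1]-m_i^{\ast,+}|$, before the cellwise variance bound $\E[|N_Q^{1,+}-N_Q^{2,+}|^2]\lesssim N_Q+\dots$ is available; without these the concentration step does not close (and your reduction of $W_2(h_N,f_N\otimes(\dots))$ to $\frac1N\sum_i|\P(\sigma_i=+1)-m^+|$ is not valid for the random empirical measure --- one genuinely needs to transport spins between different particles within a cell). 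Second, your Step 2 treats the replacement $\sigma_i\to m^0$ as a black-box ergodic-averaging estimate for a two-state chain with frozen rates. Because the rates $\lambda^\pm(t,X_i,\xi_i(t))$ move with the random, spin-dependent orientation, the paper instead runs a two-level Gr\"onwall argument: the first derivative of $\mathcal W=\E[|\xi-\xi^\ast|^2]$ contains no dissipation at all, and the dangerous term $S=\sum_i\E[(\xi_i-\xi_i^\ast)\cdot(\sigma_i-m_i^0)V_i^\ast]$ is controlled only because its own time derivative is dissipative at rate $1/\eps$ (using $\lambda^++\lambda^-\ge1$). Your heuristic for the $O(\eps)$ rate is the right intuition, but the self-consistent coupling is exactly where the argument is delicate and needs to be made explicit.
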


\begin{rem}
    \begin{enumerate}
        \item In \eqref{f_N,f}--\eqref{u_N.u}, $W_2$ denotes the $2$-Wasserstein distance.
        \item Clearly, norms of $u$ given through \eqref{u.eff} are proportional to  $\phi$. We consequently put the factor $\frac 1 \phi$ on the right-hand side of \eqref{u_N.u} to emphasize that the estimate is nontrivial in the sense that it gives a higher order approximation with respect to $\phi$.
    \end{enumerate}
\end{rem}

\subsection{The parameter \texorpdfstring{$\gamma$}{gamma} and the tensors  \texorpdfstring{$\mR(\xi)$}{R}} \label{sec:single}

We provide here the definitions of the rotational resistance  parameter $\gamma$ and the  tensor  $\mR$ that relates the torque on a particle to the moment of stress. These parameters appear in the limit system \eqref{eq:f}, \eqref{u.eff}.

\paragraph{Definition of rotational resistance  parameter $\gamma$}
Let $T,F \in \R^3$ be a given torque and force. Recall that for $\zeta \in \S^2$, we denote by $R_{\zeta} \in SO(3)$ a rotation matrix  such that $R_{\zeta} e_3 = \zeta$. 
Then, we consider the solution $v_{T,F} \in \dot H^1(\R^3)$ to the single particle problem 
\begin{equation} \label{v_T,F} \left\{
\begin{array}{ll}	-  \Delta v_{T,F} + \nabla p_{T,F} = 0, \quad \dv v_{T,F} = 0 &\quad  \text{in } \R^3 \setminus (r R_\zeta\B), \\
	D v_{T,F} = 0  & \quad \text{in } r R_\zeta\B, \\
    - \int_{\partial (r R_\zeta \B)} \Sigma[v_{T,F},p_{T,F}] n \dd x = F, \\
    - \int_{\partial (r R_\zeta\B)} x \times \Sigma[v_{T,F},p_{T,F}] n \dd x = T .
	\end{array} \right.
\end{equation}
Through linearity, scaling and rotational symmetry it can be shown that  such that the angular velocity is independent of $F$ and that there exists  parameters $\gamma, \tilde \gamma > 0$ that depend only on $\B$ such that
\begin{align} \label{angular.single.1}
    \frac 1 2 \curl v_{T,F}(0)  = \frac{1}{r^3 |\B|} \left( \gamma P_{\zeta^\perp} T + \bar \gamma (T \cdot \zeta) \zeta \right). 
\end{align}
In particular, for $T = r^3 |\B| \zeta \times \bar H$, we have
\begin{align} \label{angular.single.2}
    \frac 1 2 \curl v_{T,F}(0) \times \zeta =   \nabla v_{T,F}(0) \zeta = \fint_{r R_\zeta\B}  \nabla v_{T,F} \zeta \dd x = \gamma (\zeta \times \bar H) \times \zeta  = \gamma P_{\zeta^\perp} \bar H.
\end{align}

For details, we refer to \cite[Chapter 5]{KimKarilla13}.

% \paragraph{Definition of the tensor $A$} Let $A \in \R^{3\times 3}_0$ be a tracefree matrix.  We consider the Stokes problem
% \begin{equation} \label{eq:resistance.problem}
% \left\{
% \begin{array}{ll}	
% -  \Delta v_A + \nabla p_A = 0, \quad \dv v_A = 0 &\quad  \text{in } \R^3 \setminus (R_\zeta\B), \\
% 	D v_A = - A_{sym}  & \quad \text{in } R_\zeta\B, \\
%     \int_{\partial (R_\zeta \B)} \sigma(v_A) n = 0 = \int_{\partial (R_\zeta\B)} x \times \sigma(v_A) n  , \\
%     v_A(x) \to 0 & \quad \text{as } |x| \to \infty
% 	\end{array} \right.
% \end{equation}
% Note that on  $R_\zeta\B$ we have $v_A(x) =  - A_{sym} x + S_A x$ for some $S_A \in \skew(3)$.
% Then, the map 
% $$M \colon \R^{3\times 3}_0 \to \skew_0(3)$$ is defined as
% \begin{align}
%     M A = A +\nabla v_A(0) \label{def.M}
% \end{align}
% Note that  $\nabla v_A(0) = \fint_{ R_\zeta\B} \nabla v_A \dd y.$

% \paragraph{Definition of the tensor $S$} 

% The map $S \colon \sym_0(3) \to \sym_0(3)$ is defined by 
% \begin{align}
%     S A = \int_{\partial (R_\zeta \B)} x \otimes^0_s \sigma(v_A+ Ax,p_A) n \dd x =  \int_{\partial (R_\zeta \B)} x \otimes^0_s \sigma(v_A,p_A) n \dd x + 2 A |\B| ,
% \end{align}
% where $\otimes^0_s$ stands for the tracefree symmetrized tensor product, i.e.
% \begin{align}
%     a \otimes_s^0 b := \frac 1 2 (a \otimes b + b \otimes a) - \frac 1 3 a \cdot b \, \Id
% \end{align}

% \medskip

% We remark that both $M$ and $S$ are smooth maps on $\S^2$.

\paragraph{Definition of the tensor $\mR$}

The map $\mR: \S^2 \to \mathcal L (\skew(3),\sym_0(3))$ is defined as follows.
Given $A \in \skew(3)$, consider the Stokes problem
\begin{equation}\left\{
\begin{array}{ll}	
-  \Delta v_A + \nabla p_A = 0, \quad \dv v_A = 0 &\quad  \text{in } \R^3 \setminus (R_\zeta\B), \\
	D v_A = 0  & \quad \text{in } R_\zeta\B, \\
    \int_{\partial (R_\zeta \B)} \sigma(v_A,p_A) n \dd x  = 0, \\
    \int_{\partial (R_\zeta\B)} x \wedge \sigma(v_A,p_A) n  \dd x= A, \\
    v_A(x) \to 0 & \quad \text{as } |x| \to \infty.
	\end{array} \right.
\end{equation}
Then, we define
\begin{align} \label{mR}
    \mR(\zeta) A := \int_{\partial (R_\zeta \B)} x \otimes_s^0  \sigma(v_A,p_A) n \dd x,
\end{align}
where $a \otimes_s^0 b$ denotes the traceless symmetric tensor product, i.e. $a \otimes_s^0 b = \tfrac 1 2 (a \otimes b + b \otimes a) - \tfrac 1 3 a \cdot b \Id $.

\subsection{Discussion of the main result}
The effective multiscale model \eqref{eq:f} asserts that (to leading order in $\phi$) the -- \emph{time-averaged} -- angular velocity of a particle with orientation $\xi_i$ is given by $m_0(t,X_i,\xi_i) \gamma P_{\xi_i^\perp} H(t,X_i)$. The parameter $\gamma$ is defined in such a way that $ \gamma P_{\zeta^\perp} H$  is the angular velocity of a particle with orientation $\zeta$ on which a torque $\xi_i \times H |\mB_i|$ acts (see \eqref{angular.single.2}). We emphasize that the microscopic angular velocity is pointwise in time well approximated by $\sigma_i \gamma P_{\xi_i^\perp} H$ (cf. Proposition \ref{prop:angular.velocity}) and not by $m_0(t,X_i,\xi_i) \gamma P_{\xi_i^\perp} H(t,X_i)$. However, for the evolution of the orientation, only local \emph{time averages} of the angular velocity are  relevant which makes $m_0$ appear for $\eps \ll 1$.

On the other hand, the fluid velocity is (in the absence of external source terms on the fluid itself) of order $\phi$. The two right-hand side terms in \eqref{u.eff} are the effective Kelvin force and stress, respectively. We emphasize that $u_N$ is close to $u$ \emph{pointwise} in time. Consequently, in contrast to the appearance of $m_0$ in \eqref{eq:f}, the average spin $m_0$ in these right-hand side terms has to be understood as \emph{space-averaged} spins   of the particles.

For spherical particles, the map $\mR$ defined in \eqref{mR} satisfies $\mathcal R = 0$. In this case, and if $\curl H = 0$ (which is typically assumed inside of the ferrofluid because there are no currents)  we can rewrite the momentum equation of the Stokes equation as
\begin{align} \label{force.stress.classical.form}
    - \Delta u + \nabla p = M \cdot \nabla H + \dv (M \wedge H),
\end{align}
where $M$ is the  magnetization
\begin{align} \label{M.magnetization}
    M(t,x) = \phi \int_{\S^2} m_0(t,x,\zeta) f(t,x, \d \zeta).
\end{align}
The right-hand side of \eqref{force.stress.classical.form} coincides with the classical form of the magnetic source terms in macroscopic ferrofluids models with \enquote{internal rotation} (that is $M$ is not a priori assumed to be parallel to $H$, see e.g. \cite{Shliomis02, Rosensweig02}) where the evolution of $M$ is given through a PDE instead of \eqref{M.magnetization}.
We emphasize that for non-isotropic particles $\mR = \mR(\zeta) \neq 0$. In this case the last right-hand side term in \eqref{u.eff} cannot be expressed solely in terms of the the magnetization $M$. This is clearly a shortcoming of the macroscopic ferrofluid models. Moreover, we do not obtain a closed PDE for $M$ out of the PDE for $f$. This means, that macroscopic ferrofluid models, can only been obtained from the multiscale formulation via approximate closures or suitable singular limits.

\paragraph{Possible generalizations}

One might wonder whether it is possible to include in \eqref{eq:f} a term of the form $\dv_\zeta ((\mathcal M \nabla u) \zeta f)  $ --  for a suitable matrix $\mathcal M$ that depends on the reference particle -- to obtain an approximation which yields an error $o(\phi)$ for $W_2(f_N,f)$. This is not the case as shown in \cite{HoferMecherbetSchubert22, Gerard-VaretHoefer24}: The interaction of the particles regarding the angular velocities has a singularity  like $|X_i - X_j|^{-3}$, which is too singular for the naive mean-field limit to hold. 
In \cite{Gerard-VaretHoefer24}, a correction term in addition to $\dv_\zeta ((\mathcal M \nabla u) \zeta f)  $ has been included in \eqref{eq:f} which yields an accuracy $o(\phi)$ but only in very weak norms.

As in \cite{Gerard-VaretHoefer24, Duerinckx23}, we could add a given source term $h$ on the right-hand side of the Stokes equation \eqref{u_N} and allow for particle translations (for $ N r \gg N^{-1/3}$ and $\phi \log N \ll 1$ which prevents particle clustering by ensuring that particle translational velocities of close particles do not differ too much). 
Then, instead of \eqref{eq:f}--\eqref{u.eff}, the effective system would read
\begin{align*}
\left\{\begin{array}{l}
    \partial_t f + \dv_x (u f) +   \dv_\zeta (((\mathcal M \nabla u) \zeta   + m_0 \gamma P_{\zeta^\perp} H ) f) = 0 \\
    -  \dv ((2 + \phi \mathcal V[f]) Du ) + \nabla p =  h +  \phi \int_{\S^2}   m_0  (\nabla H)^T \zeta  f \dd \zeta + \phi \dv \int_{\S^2} m_0 (\Id + \mR) (\zeta \wedge H) f \dd \zeta,\\
    \dv u = 0,
    \end{array} \right.
\end{align*}
where $\mathcal V [f] = \int_{\S^2} S(\xi) h \dd \xi$ accounts for the increased viscosity in terms of a linear map $S(\xi) \in \mathcal L(\sym_0(3),\sym_0(3))$ that depends on the reference particle $\B$. We chose not to include $h$ and particle translations for the sake of the simplicity of the presentation.

It would also be interesting to include rotational Brownian motion into the system. Then, one would expect a Fokker-Planck equation for $f$. In view of \cite{HoeferLeocataMecherbet22}, one also expects an additional elastic stress in the fluid equation.

\subsection{Elements of the proof and outline of the rest of the paper}

The rest of the paper is devoted to the proof of Theorem \ref{main theorem}. In Section \ref{sec:fluid.approx}, we show suitable approximations for the fluid velocity $u_N$ that is the solution to the static problem \eqref{u_N} for a given $(\bar \xi , \bar \sigma) \in (\S^2)^N \times \spinstate^N$. 
These are obtained 
through the superposition of single particle solutions of the Stokes equations in the dilute regime $\phi \ll 1$. In particular, we show in Proposition \ref{prop:angular.velocity} that, for $\phi \ll 1$ and $r \ll 1$, the angular velocity $\nabla u_N \xi$ is close to $\bar \sigma_i \gamma P_{\bar \xi_i}^\perp H(t,X_i)$.  
The methods for these estimates are relatively standard, but the proofs contain some new aspects. In particular, due to the critical singularity of the interaction and since we do not assume $\phi \to 0$ as $\N \to \infty$ with a certain rate, we cannot use the method of reflections in the same way as in \cite{Hofer18InertialessLimit, Gerard-VaretHoefer24, HoeferLeocataMecherbet22} to get $L^\infty$ control on $\nabla u_N$. Therefore, we do not obtain estimates for the aforementioned angular velocities that are uniform over all particles. Instead, using Calderon-Zygmund estimates adapted from \cite{HillairetWu19, Gerard-VaretHoefer21}, we only get an estimate on these angular velocities in $l_2$ with respect to the particles. In contrast to \cite{Hofer18InertialessLimit, Gerard-VaretHoefer24, HoeferLeocataMecherbet22}, this turns out to be sufficient in our case, because we neglect particle translations. 

In Section \ref{sec:relaxation}, we show that, in the regime of fast spin jump rate $\eps \ll 1$,  the piecewise deterministic Markov jump process $(\xi,\sigma)$ can be approximated through deterministic decoupled ODEs for the orientation $\xi^\ast$ that involves   $m^0$. Moreover,  we prove  $m^0$ to be close to the expectation of the spins after an initial layer of order $\eps$. We also show that the correlations of the spins are small.  The proof of these approximations is based on a rather delicate relative energy argument for $\mathcal W = \E[|\xi - \xi^\ast|^2]$:  since the relaxation mechanism occurs in the spins only, the first time derivative of $\mathcal W$ does not feature any dissipation. However, the leading order term of this time derivative can be controlled because its own time derivative is dissipative.

Section \ref{sec:proof.main} is dedicated to the conclusion of the proof of Theorem \ref{main theorem}. Estimate \eqref{f_N,f}  follows directly from the results of Section   \ref{sec:relaxation} and a stability estimate for \eqref{eq:f}. Moreover, estimate \eqref{u_N.u} is a consequence of \eqref{h_N,h} and the results of Section \ref{sec:fluid.approx}.
The main remaining effort thus goes into the proof of \eqref{h_N,h}. In order to estimate this Wasserstein distance, we discretize the phase space $\R^3 \times (\S^2)^N $ into small cubes of length $\delta$. 
The key point is then to show through an appropriate law of large numbers that the total spin of the particles inside each of those cubes is close to the  total amount of spin predicted by the effective density $h$. For the proof we need the correlation estimate on the individual particle spins shown in Section \ref{sec:relaxation}.

\subsection{Notation}

In the following we write  $C, c$ for any constant that depends only on the quantities specified in Theorem \ref{main theorem}. Moreover, we write  $A \lesssim B$ whenever there exists $C > 0$ such that $A \leq C B$ where $C$ is a constant that depends on the quantities specified in Theorem \ref{main theorem}. The notation $\gtrsim$ is used analogously.

Many quantities like $\xi \colon (0,\infty) \to  (\S^2)^N, \sigma \colon (0,\infty) \to \spinstate^N$ and $u_N \colon (0,\infty) \to \dot H^1$, the solutions to the microscopic process defined in Section \ref{subsection Microscopic problem},  implicitly depend on the parameters $N,\eps,r$. For the sake of a leaner notation, we only make explicit some of these dependencies. 

For $p \in [1,\infty]$, we write $\|\cdot\|_p$ short for $\|\cdot \|_{L^p(\R^3)}$.

\section{Approximations for the fluid velocity} \label{sec:fluid.approx}

In this section, we consider centers $X \in \R^{3N}$ satisfying \eqref{K}--\eqref{assumption separation} and fixed orientations and spins  $(\bar \xi, \bar \sigma) \in (\S^2)^N \times \spinstate^N$ and a scaling parameter $r>0$. 
Let $ \bar u_N  \in \dot H^1(\R^3)$ be the solution to \eqref{u_N} with $H = \bar H \in W^{2,\infty}(\R^3)$ with $\|\bar H\|_{W^{2,\infty}(\R^3)} \lesssim 1$.

For  $j \in \{1, \dots, N \}$, we introduce $v_j \in \dot{H}^1(\R^3)$ the solution to the single particle problem 
\begin{align} \label{v_j}
\left\{\begin{array}{ll}
    -   \Delta  v_j + \nabla  p_j = 0,  \quad \dv  v_j = 0 \quad \text{in }  \R^3 \setminus \B_j, \\
     D  \, v_j = 0 \quad \text{in }  \B_j, \\
      - \int_{\partial \B_j} \Sigma[v_j,p_j] n \dd x  =  \bar \sigma_j r^3 |\B| (\nabla  \bar H(X_j))^T \bar \xi_j  , \\
      - \int_{\partial \B_j} (x-X_j)   \times \Sigma[v_j,p_j] n  \dd x   =    \bar \sigma_j r^3 |\B| \bar \xi_j \times  \bar H(X_j)=0.
    \end{array} \right.
 \end{align}  
 Note that $v_j$ implicitly depends on $X_j, \bar \xi_j, \bar \sigma_j$.

 Moreover, we introduce another approximation $\tilde v_j \in L^p_{loc}(\R^3)$, $p \in [1,3/2)$ the solution to 
\begin{align} \label{tilde.v_j}
    -\Delta \tilde v_j + \nabla \tilde p_j = F_j \delta_{X_j} + \dv\left( S_j \delta_{X_j}\right) \quad \dv \tilde v_j = 0 \qquad \text{in } \R^3
\end{align}
where
\begin{align}
    F_j = |\B_j| \bar \sigma_j (\nabla  \cdot \bar H(X_j))^T \bar \xi_j , \label{F_j} \\
    T_j = |\B_j| \bar \sigma_j \bar \xi_j \times \bar H(X_j)), \\
    % A_j = |\B_j| \bar \sigma_j  \bar \xi_j \wedge \bar H(X_j)), \\
    S_j = |\B_j| \bar \sigma_j  (\Id + \mR(\xi_j))(\bar \xi_j \wedge \bar H(X_j)) .\label{S_j}
\end{align}

\begin{lem}\label{lem:v_j.tilde.v_j}
    For all $x \in \R^3$ and $p \in (1,3/2)$,
    \begin{align} \label{v_j.tilde.v_j}
        \|v_j - \tilde v_j\|_{L^p(B_1(x))} \lesssim r^{1+3/{p}}
    \end{align}
\end{lem}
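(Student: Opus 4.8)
The idea is to rescale the single‑particle problem \eqref{v_j} to unit size, compare $v_j$ with $\tilde v_j$ there, and split $B_1(x)$ into a near zone $B_{2r}(X_j)$ and a far zone $B_1(x)\setminus B_{2r}(X_j)$. By scaling invariance of the Stokes system, $v_j = r\,w\big((\,\cdot\,-X_j)/r\big)$, where $w\in\dot H^1(\R^3)$ solves the exterior Stokes problem outside the unit particle $R_{\xi_j}\B$, rigid inside, with force $\hat F_j:=r^{-2}F_j=\bar\sigma_j|\B|(\nabla\bar H(X_j))^T\bar\xi_j$ and torque $\hat T_j:=r^{-3}T_j=\bar\sigma_j|\B|\,\bar\xi_j\times\bar H(X_j)$; the standing assumption $\|\bar H\|_{W^{2,\infty}}\lesssim1$ gives $|\hat F_j|\lesssim r$ and $|\hat T_j|\lesssim1$. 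Likewise $\tilde v_j=r\,\tilde w\big((\,\cdot\,-X_j)/r\big)$ with $-\Delta\tilde w+\nabla\tilde q=\hat F_j\delta_0+\dv(\hat S_j\delta_0)$ and $\hat S_j:=r^{-3}S_j=|\B|\bar\sigma_j(\Id+\mR(\xi_j))(\bar\xi_j\wedge\bar H(X_j))$, so $|\hat S_j|\lesssim1$. Thus it remains to bound $\psi:=w-\tilde w$ at unit scale and undo the scaling: a bound $|\psi(z)|\lesssim a\,|z|^{-k}$ for $|z|\gtrsim1$ gives $|v_j-\tilde v_j|\lesssim a\,r^{1+k}|\,\cdot\,-X_j|^{-k}$ on $\{|\,\cdot\,-X_j|\gtrsim r\}$, whose $L^p$-norm on $B_1(x)\setminus B_{2r}(X_j)$ is $\lesssim a\,r^{1+3/p}$ as soon as $kp>3$.

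\emph{Near zone.} Testing \eqref{v_j} with $v_j$, whose trace on $\partial\B_j$ is the rigid field $V_j+\omega_j\times(\,\cdot\,-X_j)$, gives $\|\nabla v_j\|_{L^2(\R^3)}^2\lesssim|F_j|\,|V_j|+|T_j|\,|\omega_j|\lesssim r^3$, using $|F_j|,|T_j|\lesssim r^3$ and the single‑particle mobility bound $|V_j|+|\omega_j|\lesssim1$. By Sobolev embedding $\|v_j\|_{L^6(\R^3)}\lesssim r^{3/2}$, so (since $\B_j\subset B_r(X_j)$ and $p<6$) $\|v_j\|_{L^p(B_{2r}(X_j))}\lesssim r^{3/2}|B_{2r}|^{1/p-1/6}\lesssim r^{1+3/p}$. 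On the other hand $\tilde v_j$ is $F_j$ times the Oseen tensor plus $S_j$ contracted with its gradient, so $|\tilde v_j|\lesssim|F_j|\,|\,\cdot\,-X_j|^{-1}+|S_j|\,|\,\cdot\,-X_j|^{-2}\lesssim r^3|\,\cdot\,-X_j|^{-2}$ on $B_{2r}(X_j)$; since $2p<3$ this kernel lies in $L^p(B_{2r}(X_j))$ with norm $\lesssim r^3\cdot r^{3/p-2}=r^{1+3/p}$. The triangle inequality gives $\|v_j-\tilde v_j\|_{L^p(B_{2r}(X_j))}\lesssim r^{1+3/p}$.

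\emph{Far zone.} Away from the particle, $\psi$ is a divergence‑free solution of the homogeneous Stokes equations decaying at infinity. The field $\tilde w$ reproduces the Stokeslet $\hat F_j$ and the rotlet $\hat T_j$ of $w$ exactly — so $\psi$ carries zero net force and zero net torque — while, by the very definition of $\mR$ in \eqref{mR}, $\hat S_j$ also reproduces the torque‑induced stresslet of $w$; hence the stresslet of $\psi$ is only the force‑induced stresslet of $w$, of size $O(|\hat F_j|)=O(r)$. Standard Stokes far‑field expansions (see e.g.\ \cite{KimKarilla13}) then yield $|\psi(z)|\lesssim r|z|^{-2}+(|\hat F_j|+|\hat T_j|)|z|^{-3}\lesssim r|z|^{-2}+|z|^{-3}$ for $|z|\ge2$, hence $|v_j-\tilde v_j|\lesssim r^4\big(|\,\cdot\,-X_j|^{-2}+|\,\cdot\,-X_j|^{-3}\big)$ on $\{|\,\cdot\,-X_j|\ge2r\}$. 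When $x$ stays at bounded distance from $K$ we have $B_1(x)\subset B_C(X_j)$ for a fixed $C$, and $\int_{2r}^{C}(s^{2-2p}+s^{2-3p})\,ds\lesssim1+r^{3-3p}$ because $p>1$, so this zone contributes $\lesssim r^{4p}(1+r^{3-3p})\lesssim r^{p+3}$; when $x$ is far from $K$ one has $|\,\cdot\,-X_j|\gtrsim1$ on $B_1(x)$ and the contribution is $\lesssim r^{4p}\lesssim r^{p+3}$. Adding the two zones and taking $p$-th roots gives \eqref{v_j.tilde.v_j}.

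\emph{Main obstacle.} The delicate point is the far‑zone step: one must verify that $\hat S_j$ is precisely the combined Stokeslet–rotlet–($\mR$-)stresslet data of $w$, so that $\psi$ has dipole coefficient equal only to the force‑induced stresslet ($O(r)$) and a remainder decaying like $|z|^{-3}$. The two endpoint restrictions on $p$ enter exactly here and in the near zone: $p<3/2$ is required for integrability of the dipole singularity $|\,\cdot\,-X_j|^{-2}$ of $\tilde v_j$ (and for $\tilde v_j\in L^p_{\loc}$ at all), while $p>1$ ensures that the rescaled $|\,\cdot\,-X_j|^{-3}$ tail, cut off at scale $r$, integrates to $r^{3-3p}$ rather than $r^{3-3p}|\log r|$. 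Everything else is classical Stokes potential theory.
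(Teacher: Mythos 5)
Your proof is correct, but it follows a genuinely different route from the paper. The paper argues by duality: it tests $v_j-\tilde v_j$ against the solution $\varphi$ of a dual Stokes problem with datum $g\in L^{p'}$, observes that by the definitions of $F_j$, $S_j$ and $\mR$ the pairing reduces to $\int_{\partial\B_j}\Sigma[v_j,p_j]n\cdot\bigl(\varphi-\varphi(X_j)-(\cdot-X_j)\cdot\nabla\varphi(X_j)\bigr)$, i.e.\ to a second-order Taylor remainder of the test function, and then closes the estimate with a cut-off/Bogovskii construction, the energy bound $\|Dv_j\|_2\lesssim r^{3/2}$ and $C^{1,\alpha}$ regularity of $\varphi$. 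You instead work on the primal side: rescale to unit size, split into a near zone (energy estimate plus Sobolev for $v_j$, explicit singularity of the Stokeslet/dipole for $\tilde v_j$) and a far zone, and argue via the multipole expansion that $\tilde v_j$ reproduces the Stokeslet, rotlet and torque-induced stresslet of $v_j$ exactly, leaving only the force-induced stresslet of size $O(r)$ at order $|z|^{-2}$ plus an $O(1)$ quadrupole tail at order $|z|^{-3}$. Your accounting of the first moment of the traction (antisymmetric part $=$ torque, symmetric traceless part $=\mR$ applied to it plus an $O(|\hat F_j|)$ force-induced piece, trace part annihilated by $\dv\Phi=0$) is exactly right, and the exponent bookkeeping in both zones checks out, including the role of the two endpoints $p>1$ and $p<3/2$. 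What each approach buys: the duality argument needs only the energy estimate and never touches pointwise decay, and the definition of $S_j$ (in particular the $\mR$ term) emerges automatically from the Taylor pairing; your argument requires the quantitative far-field expansion with remainder (which the paper in any case imports from \cite[Proposition 4.5]{HoeferLeocataMecherbet22} for Lemma \ref{lemma:reflection method}), but in exchange yields pointwise decay of $v_j-\tilde v_j$, which is more information than the $L^p$ bound and would let you read off the estimate for any exponent range directly. The only cosmetic caveat is that the expansion is valid outside $X_j+\theta r\B$ rather than literally outside $B_{2r}(X_j)$, so the splitting radius should be $\theta r$ (or $Cr$ with $C$ depending on $\B$); this changes nothing in the exponents.
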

\begin{proof}
    % By standard arguments due to regularity theory of the Stokes equation, we can write 
    % $v_j$ as the solution to 
    % \begin{align}
    % -   \Delta  v_j + \nabla  p_j = \Sigma_j[v_j,p_j] \mathcal H^2|_{\partial \B_j} ,  \quad \dv  v_j = 0 
    % \end{align}
    Let $g \in L^{p'}(\R^3)$, with $\supp g \subset \overline{B_1(x)}$. Let $\varphi \in \dot H^1(\R^3)$ be the solution to 
\begin{align}
    -   \Delta  \varphi + \nabla  p = g,  \quad \dv  \varphi  = 0 \quad \text{in }  \R^3.
 \end{align}
Then,
 \begin{align} \label{g.v_j.tilde}
    \begin{aligned}
     \int_{\R^3} g \cdot (v_j -  \tilde v_j) &= \int_{\R^3} 2 D \varphi  : D ( v_j -  \tilde v_j) \\
     &= \int_{\partial \B_j} \Sigma[v_j,p_j] n \cdot \varphi \dd x -  F_j \cdot \varphi(X_j)(0) + S_j : \nabla \varphi(X_j) \\
     &=\int_{\partial \B_j} \Sigma[v_j,p_j] n \cdot (\varphi(x) - \varphi(X_j) - (x-X_j) \cdot \nabla \varphi(X_j)),
     \end{aligned}
 \end{align}
where we used the definition of $v_j$, $F_j$ and $S_j$ from \eqref{v_j} and \eqref{F_j}--\eqref{S_j} as well as the definition of $\mR$ from \eqref{mR} 
% \begin{align}
%   \left|  F_j - \int \Sigma[v_j,p_j] n \right| +  \left|  S_j - \int x \otimes_0 \Sigma[v_j,p_j] n \right| \lesssim |\B_j|  \|H\|_{W^{2,\infty}} r \lesssim r^4
% \end{align}
and $\dv \varphi = 0$.
 % \begin{align}
 %     \left|\int_{\R^3} g \cdot (\tilde v_j -  v_j)\right| \lesssim  \|\varphi \|_{W^{1,\infty}} r^4 + \left|\int_{\partial \B_j} \Sigma[v_j,p_j] n \cdot (\varphi(x) - \varphi(X_j) - (x-X_j) \cdot \nabla \varphi(X_j))\right|
 % \end{align}
 
 Let $\eta \in C_c^\infty(B_2(X_j))$ be a cut-off function  with $\|\nabla^l \eta\|_\infty \lesssim r^{-l}$, $l = 0,1$ and define
 \begin{align}
     \tilde \psi(x) &:= \eta (x) \left(\varphi(x) - \varphi(X_j) - (x-X_j) \cdot \nabla \varphi(X_j)\right), \\
     \psi &= \psi - \Bog(\dv \tilde \psi),
 \end{align}
 where $\Bog: L^2_0(B_{2r}(X_j) \setminus \B_j) \to H^1_0(B_{2r}(X_j) \setminus \B_j)$ is a Bogovski operator, i.e. an operator that satisfies
 \begin{align}
     \dv \Bog(h) = h. 
 \end{align}
 Here $L^2_0(U) = \{h \in L^2(U) : \int_U h = 0\}$.
It is classical that such an operator exists, and by scaling considerations we can choose it such that
\begin{align}
    \|\nabla \Bog(h)\|_{L^2(U)} \lesssim \|h\|_{L^2(U)}.
\end{align}
Hence, 
 \begin{align}
     \|\nabla \psi\|_{L^2(\R^3)} &\lesssim r^{-1} \|\varphi(x) - \varphi(X_j) - (x-X_j) \cdot \nabla \varphi(X_j)\|_{L^2(B_{2r(X_j))}} \\
     &+ \|\nabla (\varphi(x) - \varphi(X_j) - (x-X_j) \cdot \nabla \varphi(X_j))\|_{L^2(B_{2r(X_j))}} \\
     &\lesssim r^{3/2 + \alpha} [\nabla \varphi]_{\alpha}.
 \end{align}
 Hence, 
 \begin{align} \label{phi.to.psi}
    \begin{aligned}
     \left|\int_{\partial \B_j} \Sigma[v_j,p_j] n \cdot (\varphi(x) - \varphi(X_j) - (x-X_j) \cdot \nabla \varphi(X_j))\right| &= \left|\int_{\partial \B_j} \Sigma[v_j,p_j] n \cdot \psi \right| \\
     &= \left|\int_{\partial \B_j} D v_j \cdot D \psi \right| \\
     &\lesssim \|D v_j\|_2  r^{3/2 + \alpha} [\nabla \varphi]_{\alpha}.
     \end{aligned}
 \end{align}
 By regularity theory for the Stokes equation and Morrey's inequality
 \begin{align} \label{phi.C^1.alpha}
     \|\varphi\|_{C^{1,1-3/{p'}}}\lesssim \|g\|_{L^{p'} \cap L^1} \lesssim \|g\|_{p'},
 \end{align}
 where we used $\supp g \subset \overline{B_1(x)}$ in the last estimate. 
 Moreover, by linearity and scaling,
 \begin{align} \label{v_j.H^1}
     \|D v_j\|_2 \lesssim \frac{|F_j|}{r^{1/2}} + \frac{|T_j|}{r^{3/2}} \lesssim r^{3/2}.
 \end{align}
 Collecting  \eqref{g.v_j.tilde}, \eqref{phi.to.psi}, \eqref{phi.C^1.alpha} and \eqref{v_j.H^1} yields
 \begin{align}
     \left|\int_{\R^3} g \cdot (\tilde v_j -  v_j)\right| \lesssim r^{4-3/{p'}} \|g\|_{p'}.
 \end{align}
 Since $g \in L^{p'}(\R^3)$ with $\supp g \subset \overline{B_1(x)}$ was arbitrary, we conclude \eqref{v_j.tilde.v_j}.
\end{proof}

\begin{lem} \label{lemma:reflection method}
We have
\begin{align}
    \|\bar u_N - \sum_j v_j \|^2_{\dot H^1(\R^3)}  \lesssim \sum_i \| \sum_{j \neq i} \nabla  v_j\|^2_{L^2(\B_i)} \lesssim \phi^3  \label{v_j.u_N.H^1}.
    % \|u_N - \sum_j v_j \|^2_{L^p(B_1(x))} \lesssim 
\end{align}
\end{lem}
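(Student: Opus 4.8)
The plan is to realize $\bar u_N - \sum_j v_j$ as the error of one step of the method of reflections and to close a localized energy estimate. Set $w := \bar u_N - \sum_j v_j \in \dot H^1(\R^3)$ and $q := p_N - \sum_j p_j$. I would first record three structural facts. (i) Since $\bar u_N$ and each $v_j$ solve the homogeneous Stokes system outside their respective particles, $-\Delta w + \nabla q = 0$ and $\dv w = 0$ in $\R^3 \setminus \bigcup_i \B_i$. (ii) On each $\B_i$ one has $D w = -\sum_{j \neq i} D v_j$, because $D\bar u_N = 0$ and $D v_i = 0$ there. (iii) Computed from the exterior, $w$ exerts zero net force and torque on every $\partial \B_i$: indeed $\int_{\partial \B_i} \Sigma[\bar u_N, p_N] n = \int_{\partial \B_i}\Sigma[v_i, p_i] n = -F_i$ by the force balances in \eqref{u_N} and \eqref{v_j}, while for $j \neq i$ the field $v_j$ is a Stokes flow in a neighbourhood of $\overline{\B_i}$, so $\int_{\partial \B_i}\Sigma[v_j, p_j] n = \int_{\B_i}\dv\Sigma[v_j,p_j] = 0$, and likewise for the torques; hence $\int_{\partial \B_i}\Sigma[w,q]n = 0$ and $\int_{\partial \B_i}(x-X_i)\times\Sigma[w,q]n = 0$.

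Next I would test the equation for $w$ against $w$ and split $\|w\|_{\dot H^1(\R^3)}^2 = 2\int_{\R^3}|Dw|^2 = 2\int_{\R^3\setminus\bigcup_i\B_i}|Dw|^2 + 2\sum_i\int_{\B_i}|Dw|^2$. By (ii) the interior sum equals $2\sum_i\|\sum_{j\neq i}Dv_j\|_{L^2(\B_i)}^2 \leq 2\sum_i\|\sum_{j\neq i}\nabla v_j\|_{L^2(\B_i)}^2 =: 2S$. For the exterior integral, integration by parts together with (i) gives $2\int_{\R^3\setminus\bigcup_i\B_i}|Dw|^2 = -\sum_i\int_{\partial\B_i}(\Sigma[w,q]n)\cdot w$; using (iii) I may subtract from $w$ on each $\partial\B_i$ the rigid motion $\rho_i$ minimizing $\|w-\rho_i\|_{L^2(\B_i)}$, and then Korn's second inequality on $\B_i$ and the corresponding Poincaré inequality (scale-invariant, constants depending only on the shape of $\B$) give $\|\nabla(w-\rho_i)\|_{L^2(\B_i)} + r^{-1}\|w-\rho_i\|_{L^2(\B_i)}\lesssim\|Dw\|_{L^2(\B_i)}$. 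Since $\dv(w-\rho_i)=0$ on $\B_i$, I extend $w-\rho_i$ to a divergence-free field $\Psi_i$ supported in $B_{\theta r}(X_i)$, vanishing on $\partial B_{\theta r}(X_i)$, with $\|\nabla\Psi_i\|_{L^2}\lesssim\|Dw\|_{L^2(\B_i)}$ — the surrounding shell $B_{\theta r}(X_i)\setminus\B_i$ has width comparable to $r$, so the extension is scale-invariant, and the balls $B_{\theta r}(X_i)$ are pairwise disjoint by \eqref{ass:non.overlap}. Integrating by parts once more over the shell, where $w$ is a homogeneous Stokes flow, yields $-\int_{\partial\B_i}(\Sigma[w,q]n)\cdot w = -\int_{\partial\B_i}(\Sigma[w,q]n)\cdot(w-\rho_i) = \int_{B_{\theta r}(X_i)\setminus\B_i}2Dw:D\Psi_i$, so that by Cauchy--Schwarz over the disjoint shells, $2\int_{\R^3\setminus\bigcup_i\B_i}|Dw|^2 \lesssim (\sum_i\|Dw\|_{L^2(B_{\theta r}(X_i)\setminus\B_i)}^2)^{1/2}(\sum_i\|Dw\|_{L^2(\B_i)}^2)^{1/2} \leq \|w\|_{\dot H^1(\R^3)}\,S^{1/2}$. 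Adding the interior bound gives $\|w\|_{\dot H^1}^2 \lesssim \|w\|_{\dot H^1}S^{1/2} + S$, and Young's inequality absorbs the first term, yielding the first inequality $\|w\|_{\dot H^1}^2 \lesssim S$.

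For the second inequality I would use the far-field decay of the $v_j$. From \eqref{v_j} and $\|\bar H\|_{W^{2,\infty}(\R^3)}\lesssim1$ one has $|F_j|,|T_j|\lesssim r^3$ and $\|\nabla v_j\|_{L^2(\R^3)}\lesssim r^{3/2}$; comparing $v_j$ with the explicit field $\tilde v_j$ from \eqref{tilde.v_j} via Lemma \ref{lem:v_j.tilde.v_j} and interior regularity for the Stokes system shows that $v_j$ is, up to a higher-order correction, a Stokeslet of strength $O(r^3)$ plus a rotlet/stresslet of strength $O(r^3)$, hence $|\nabla v_j(x)|\lesssim r^3|x-X_j|^{-3}$ for $r\lesssim|x-X_j|\lesssim1$. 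Since $|x-X_j|\sim d_{ij}:=|X_i-X_j|$ on $\B_i\subset B_r(X_i)$ (using $d_{ij}\geq 2\theta r$), this gives $\|\nabla v_j\|_{L^2(\B_i)}\lesssim r^{3/2}\cdot r^3 d_{ij}^{-3}$, so $S \leq \sum_i(\sum_{j\neq i}\|\nabla v_j\|_{L^2(\B_i)})^2 \lesssim r^9\sum_i(\sum_{j\neq i}d_{ij}^{-3})^2$. A dyadic decomposition of the sum over $j$ into shells $\{d_{ij}\sim 2^k c_0 N^{-1/3}\}$, each containing $\lesssim 8^k$ indices by the separation assumption \eqref{assumption separation}, yields $\sum_{j\neq i}d_{ij}^{-3}\lesssim N$, and therefore $S \lesssim r^9 N^3 \sim \phi^3$.

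I expect the main obstacle to be the exterior part of the energy estimate: one must ensure that the boundary terms $\int_{\partial\B_i}(\Sigma[w,q]n)\cdot w$ are controlled \emph{exclusively} by $\|\sum_{j\neq i}\nabla v_j\|_{L^2(\B_i)}$ and by $\|Dw\|_{L^2}$ (the latter to be absorbed), which forces one to combine the force/torque-freeness of $w$ (to legitimately replace $w$ by $w-\rho_i$), the scale-invariance of the Korn inequality and of the divergence-free extension on the annular shell $B_{\theta r}(X_i)\setminus\B_i$, and the disjointness of these shells from \eqref{ass:non.overlap}. The decay estimate for $v_j$ and the discrete convolution bound are routine in comparison, although one should note that on the relevant length scales $|x-X_j|\lesssim1$ it is the $|x-X_j|^{-3}$ term (rotlet/stresslet), not the $|x-X_j|^{-2}$ Stokeslet term, that governs $|\nabla v_j|$.
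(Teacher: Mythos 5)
Your argument for the first inequality in \eqref{v_j.u_N.H^1} is correct and is essentially the paper's: the paper also writes the remainder as a force- and torque-free Stokes flow with prescribed symmetric gradient $-\sum_{j\neq i}Dv_j$ in each $\B_i$ and tests against the remainder minus a divergence-free rigid extension of the interior data; you simply construct that extension by hand (rigid-motion subtraction, Korn, Bogovskii-type extension on the disjoint shells) where the paper invokes a ready-made lemma.

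The second inequality, however, contains a genuine gap. Your dyadic count is off by a logarithm: each shell $\{d_{ij}\sim 2^k c_0 N^{-1/3}\}$ contributes $\lesssim 8^k\cdot(2^k c_0 N^{-1/3})^{-3}\sim N$, \emph{uniformly in} $k$, and there are $\sim\log N$ shells before $d_{ij}$ reaches $\diam K\lesssim 1$, so \eqref{assumption separation} only gives $\sum_{j\neq i}|X_i-X_j|^{-3}\lesssim N\log N$ (and this is sharp for a lattice). The exponent $-3$ is exactly critical in three dimensions. Consequently your triangle-inequality bound yields $S\lesssim r^9 N(N\log N)^2=\phi^3\log^2 N$, which is strictly weaker than the claimed $\phi^3$; since the theorem does not assume $\phi\log N\ll1$, the logarithm cannot be discarded. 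This is precisely the obstruction the paper flags (``critical singularity of the interaction''), and it is why the proof does \emph{not} estimate $|\nabla v_j|$ pointwise and sum absolute values. Instead it splits $\nabla v_j$ into the Stokeslet part ($\sim r^3|x|^{-2}$, subcritical, handled by $\sup_i\sum_j d_{ij}^{-2}\lesssim N$), the higher-order remainder ($\sim r^4|x|^{-4}$, also subcritical), and the borderline stresslet part $\sum_j\nabla^2\Phi(\cdot-X_j):S_j$, which is estimated in $\ell^2$ over the particles by viewing it as a discretization of the Calder\'on--Zygmund operator with kernel $\nabla^2\Phi$ applied to the piecewise-constant density $g=|B_{\dmin/4}|^{-1}\sum_j S_j\1_{B_{\dmin/4}(X_j)}$; the $L^2$ boundedness of that operator exploits the cancellation of $\nabla^2\Phi$ and gives $\phi\|g\|_{L^2}^2\lesssim\phi^3$ without any logarithm. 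To repair your proof you would need to replace the pointwise summation of the $|x|^{-3}$ tail by such an $\ell^2$/singular-integral argument (or an equivalent duality argument); no rearrangement of the absolute-value sums will remove the $\log^2 N$.
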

\begin{proof}
We introduce the short notation 
\begin{align}
    d_{ij} = \begin{cases}
        |X_i - X_j| &\quad \text{for } i \neq j,\\
        \dmin &\quad\text{for } i = j.
    \end{cases}
\end{align}
We will use the following estimates  taken from \cite[Lemma 4.8]{NiethammerSchubert19} on 
\begin{align}
    \mathcal S_n := \sup_i \sum_{j} \frac 1 {d_{ij}^n}
\end{align}
for $n \in \N$ and all $1 \leq j,k \leq N$:
\begin{align}
    \mathcal S_2 &\lesssim \frac{N^{1/3}}{\dmin^2} \lesssim N, \label{S_2} \\
    \mathcal S_4 &\lesssim \frac 1 {\dmin^4} \lesssim N^{4/3} \label{S_4}.
    % \\
    % \sum_{i} \frac  {1}{d_{ij}^2 d_{ik}^2} &\lesssim \frac{\mathcal S_2}{d_{jk}^2}, \label{S_2.double} \\
    %   \sum_{i} \frac  {1}{d_{ij}^4 d_{ik}^4}&\lesssim \frac{\mathcal S_4}{d_{jk}^4} \label{S_4.double}.
\end{align}
% The first two estimates are taken from \cite[Lemma 4.8]{NiethammerSchubert19}. The third estimate is shown in \cite[Lemma A.1]{HoferSchubert23} and the last estimate can be shown analogously to that one.

        We introduce the remainder $R_N:= \bar u_N - \sum_j v_j$, which solves 
\begin{align} \label{R_N}
\left\{\begin{array}{rl}
    - \Delta R_N + \nabla p = 0, \quad \dv R_N = 0 &\quad \text{in} \quad \R^3 \setminus \cup_i \B_i,\\
    D R_N= - \sum_{j \neq i} D v_j(x)  &\quad \text{in} \quad \B_i ~ \forall i, \\
    \int_{\partial \B_i} \Sigma[R_N,p] n  = 0 = \int_{\partial \B_i} (x - X_i) \times \Sigma[R_N,p] n & \quad \text{for all } i.
    \end{array}    \right.
\end{align}
By a standard argument, 
\begin{align} \label{est.R_N}
    \|\bar u_N - \sum_j v_j \|^2_{\dot H^1(\R^3)} \lesssim  \sum_i \| \sum_{j \neq i} \nabla  v_j\|^2_{L^2(\B_i)}.
\end{align}
Indeed, by \cite[Lemma 4.6]{HoeferLeocataMecherbet22} there exists a function $\varphi \in \dot H^1(\R^3)$ with $\dv \varphi = 0$ such that $D \varphi = D R_N$ in $\cup_i \B_i$ and 
\begin{align}
    \|\varphi\|_{\dot H^1(\R^3)} \lesssim \|D R_N\|_{L^2(\cup_i \B_i)}.
\end{align}
Testing \eqref{R_N} with $R_N - \varphi$ yields \eqref{est.R_N}.
This proves the first inequality in \eqref{v_j.u_N.H^1}.

Moreover, by \cite[Proposition 4.5]{HoeferLeocataMecherbet22},
for all $x \in \R^3 \setminus (X_j + \theta r \B)$
\begin{align}
    v_j(x) = \Phi(x-X_j) \cdot F_j + \nabla \Phi(x-X_j) : S_j + \mathcal  H_j(x-X_j), 
\end{align}
where
\begin{align}
    |\nabla \mathcal H_j(x)| \lesssim \frac {|F_j|r}{|x|^{3}} +  \frac {|S_j|r}{|x|^{4}},
\end{align}
and where $\Phi$ is the fundamental solution of the Stokes equation,
\begin{align}
    \Phi(x) = \frac 1 {8 \pi}\left( \frac 1 {|x|} + \frac{x \otimes x}{|x|^3} \right).
\end{align}
Strictly speaking, \cite[Proposition 4.5]{HoeferLeocataMecherbet22} only applies to $F_j = 0$ but the adaptation of the proof to $F_j \neq 0$ is straightforward.  

Hence, 
\begin{align} \label{split.sum.v_j}
\begin{aligned}
    \sum_i \| \sum_{i \neq j} \nabla  v_j\|^2_{L^2(\B_i)} \lesssim \sum_i \| \sum_{j \neq i} \nabla \Phi(\cdot-X_j) \cdot F_j \|^2_{L^2(\B_i)} &+ \sum_i \| \sum_{j \neq i} \nabla^2 \Phi(\cdot-X_j) : S_j \|^2_{L^2(\B_i)} \\
    &+ \sum_i \| \sum_{j \neq i} \nabla  \mathcal H_j\|^2_{L^2(\B_i)} .
\end{aligned}
\end{align}
The first and last right-hand side term are easily estimated by an explicit computation relying on the decay of $\Phi$  and the form of $F_j$ and $S_j$ from \eqref{F_j}--\eqref{S_j}. Firstly, by \eqref{S_2},
\begin{align} \label{sum.F_j}
    \sum_i \| \sum_{j \neq i} \nabla \Phi(\cdot-X_j) \cdot F_j \|^2_{L^2(\B_i)} & \lesssim r^6 \phi S_2^2 \lesssim \phi^3.
\end{align}
Secondly, using first \eqref{K} to bound $|X_i - X_j| \lesssim 1$ and then also  \eqref{S_4},
\begin{align} \label{sum.H_j}
    \sum_i \| \sum_{j \neq i} \nabla  \mathcal H_j\|^2_{L^2(\B_i)} & \lesssim \phi r^{8} S_4^2 \lesssim \phi^{11/3} \lesssim \phi^3,
\end{align}
since \eqref{K}--\eqref{assumption separation} imply $\phi \leq |k| \lesssim 1$.
Finally, the second right-hand side term  of \eqref{split.sum.v_j} satisfies 
\begin{align} \label{sum.S_j}
    \sum_i \| \sum_{j \neq i} \nabla^2 \Phi(\cdot-X_j) : S_j \|^2_{L^2(\B_i)} \lesssim \phi^3.
\end{align}
This follows by arguments as in \cite{Hoefer19}. We provide the details for the reader's convenience.
We write $K^N = \nabla^2 \Phi \1_{B_{\dmin/4}(0)}$. We introduce the function 
\begin{align}
    g = \frac 1  {|B_{\dmin/4}(0)|} \sum_j S_j \1_{B_{\dmin/4}(X_j)}.
\end{align}
Then, for all $x \in B_i$
\begin{align}
      \sum_{j \neq i} \nabla^2 \Phi(x-X_j)  S_j    &=
    \sum_j  \int_{B_{\dmin/2}(X_j)} (K^N(x-X_j) - K^N(x-y))g(y) \dd y \\
     & +  \fint_{B_{\dmin/4}(X_i)} \int_{\R^3} (K^N(x-y) - K^N(z-y))g(y) \dd y \dd z \\
     &+ \fint_{B_{\dmin/4}(X_i)} \int_{\R^3}  K^N(z-y)g(y) \dd y \dd z \\
     &=: A_1^i(x) + A_2^i(x) + A_3^i.
\end{align}
Since for all $x \in \B_i, y \in B_{\dmin/2}(X_j)$
\begin{align}
    |K^N(x-X_j) - K^N(x-y)| \lesssim \frac {\dmin}{d_{ij}^4},
\end{align}
we have thanks to \eqref{assumption separation} and \eqref{S_4}
\begin{align}
    |A_1^i| \lesssim  r^3 \sum_{j \neq i} \frac {\dmin}{d_{ij}^4} \lesssim \phi.
\end{align}
Similarly, we have $|A_2^i| \lesssim \phi$ such that
\begin{align}
    \sum_i \|A_1^i + A_2^i\|_{L^2(\B_i)}^2 \lesssim \phi^3.
\end{align}
Lastly, since $A_3^i$ is independent of $x$
\begin{align}
    \sum_i \|A_2^i\|_{L^2(\B_i)}^2 \lesssim r^3 \sum_i |A_2^i|^2 \lesssim \phi \int_{\R^3} | K^N \ast g|^2  \dd y \dd z \lesssim \phi \|g\|_{L^2}^2 \lesssim \phi^3,
\end{align}
where we used in the second last estimate that $K$ is a Calderon-Zygmund operator. 
Gathering these estimates yields \eqref{sum.S_j}.

Inserting \eqref{sum.F_j}, \eqref{sum.H_j} and \eqref{sum.S_j} into \eqref{split.sum.v_j}, we deduce
\begin{align}
      \sum_i \| \sum_{i \neq j} \nabla  v_j\|^2_{L^2(\B_i)} \lesssim \phi^3.
\end{align}
In combination with \eqref{est.R_N}, this yields \eqref{v_j.u_N.H^1}.
% Let $g \in L^{p'}(\R^3)$ with $\supp g \subset B_1(x)$. Let $\varphi \in \dot H^1(\R^3)$ be the solution to 
% \begin{align}
%     -   \Delta  \varphi + \nabla  p = g,  \quad \dv  \varphi  = 0 \quad \text{in }  \R^3.
%  \end{align}
% Then, by \cite[Proposition 8]{Gerard-VaretHoefer21}, we have 
% \begin{align}
%     \int g \cdot R_N \leq \phi^{1/2} \sum_i \|\sum_j  D v_j \|_{L^2(\cup_i \B_i)}
% \end{align}
%  \begin{align}
%      \int_{\R^3} g \cdot R_N = \int_{\R^3} 2 D \varphi  : D R_N = \sum_i \int_{\B_i} D \varphi  : D h_i + \sum_i \int_{\partial \B_i} .
%  \end{align}
\end{proof}

\begin{prop} \label{prop:angular.velocity}
We have
\begin{equation}
    \label{inequality reflection rotation speed}
    \frac{1}{N} \sum_{i=1}^N \left|\nabla \bar u_N(X_i)  \bar \xi_i - \bar \sigma_i \gamma P_{\bar \xi_i^\perp} \bar H(X_i) \right|^2 \lesssim   \phi^2.
\end{equation}
\end{prop}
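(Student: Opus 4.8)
The plan is to localize near the $i$-th particle by comparing $\bar u_N$ with the single-particle solution $v_i$ from \eqref{v_j}, which reproduces the target rotation rate exactly. Indeed, since $D v_i = 0$ on $\B_i$, the field $v_i$ is an affine rigid motion on $\B_i$, so $\nabla v_i(X_i)\bar\xi_i = \tfrac12\curl v_i(X_i)\times\bar\xi_i$. Translating the $i$-th particle to the origin, $v_i$ solves the single-particle Stokes problem with torque $T_i = \bar\sigma_i|\B_i|\,\bar\xi_i\times\bar H(X_i)$ and a force $F_i$; since the force does not influence the angular velocity (by \eqref{angular.single.1}) and $T_i\cdot\bar\xi_i = 0$, formula \eqref{angular.single.2}, applied with $\bar H$ replaced by the constant $\bar\sigma_i\bar H(X_i)$ and using linearity in the torque, yields
\[
	\nabla v_i(X_i)\,\bar\xi_i \;=\; \bar\sigma_i\,\gamma\, P_{\bar\xi_i^\perp}\bar H(X_i).
\]
Hence the quantity to be estimated is exactly $\nabla w_i(X_i)\bar\xi_i$ with $w_i := \bar u_N - v_i$.

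Next I would use that $w_i$ is an affine rigid motion on $\B_i$ as well (both $\bar u_N$ and $v_i$ satisfy $D(\cdot) = 0$ there), so $\nabla w_i$ is a constant matrix on $\B_i$ and $|\B_i|\,|\nabla w_i(X_i)|^2 = \|\nabla w_i\|_{L^2(\B_i)}^2$. Using $|\bar\xi_i| = 1$, $|\B_i| = r^3|\B|$ and the normalization $\phi = Nr^3|\B|$ from \eqref{phi},
\[
	\frac1N\sum_{i=1}^N\bigl|\nabla\bar u_N(X_i)\bar\xi_i - \bar\sigma_i\gamma P_{\bar\xi_i^\perp}\bar H(X_i)\bigr|^2
	\;\le\; \frac1N\sum_{i=1}^N|\nabla w_i(X_i)|^2
	\;=\; \frac1\phi\sum_{i=1}^N\|\nabla w_i\|_{L^2(\B_i)}^2 ,
\]
so it suffices to prove $\sum_i\|\nabla w_i\|_{L^2(\B_i)}^2 \lesssim \phi^3$.

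For this, I would decompose $w_i = R_N + \sum_{j\neq i} v_j$, where $R_N = \bar u_N - \sum_j v_j$ is the reflection remainder from \eqref{R_N}, and estimate
\[
	\sum_i\|\nabla w_i\|_{L^2(\B_i)}^2 \;\lesssim\; \sum_i\|\nabla R_N\|_{L^2(\B_i)}^2 + \sum_i\Bigl\|\sum_{j\neq i}\nabla v_j\Bigr\|_{L^2(\B_i)}^2 \;\le\; \|R_N\|_{\dot H^1(\R^3)}^2 + \sum_i\Bigl\|\sum_{j\neq i}\nabla v_j\Bigr\|_{L^2(\B_i)}^2 ,
\]
where the last step uses that the $\B_i$ are pairwise disjoint by \eqref{ass:non.overlap}. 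Both terms on the right are $\lesssim \phi^3$: they are precisely the left-hand and the middle quantities in the chain \eqref{v_j.u_N.H^1} of Lemma \ref{lemma:reflection method}. Combining the two displays above proves the proposition.

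The step I expect to be the genuine crux has in fact already been carried out: controlling the reflection remainder $R_N$ and the tail sum $\sum_i\|\sum_{j\neq i}\nabla v_j\|_{L^2(\B_i)}^2$, where the critical $|X_i-X_j|^{-3}$ interaction forces one to replace the usual method of reflections by the Calder\'on--Zygmund argument of Lemma \ref{lemma:reflection method}, is exactly the content of that lemma. What remains is bookkeeping, the only delicate point being the rigid-motion identity $|\B_i|\,|\nabla w_i(X_i)|^2 = \|\nabla w_i\|_{L^2(\B_i)}^2$: combined with $\phi = Nr^3|\B|$, it is precisely what converts the $O(\phi^3)$ energy bound of Lemma \ref{lemma:reflection method}, spread over $N\sim \phi\, r^{-3}$ particles, into the claimed $O(\phi^2)$; on the deterministic side one needs the identification of $\nabla v_i(X_i)\bar\xi_i$ with the single-particle equilibrium rotation rate via \eqref{angular.single.1}--\eqref{angular.single.2}.
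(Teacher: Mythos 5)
Your proposal is correct and follows essentially the same route as the paper: identify $\nabla v_i(X_i)\bar\xi_i$ with $\bar\sigma_i\gamma P_{\bar\xi_i^\perp}\bar H(X_i)$ via \eqref{angular.single.2}, then control the remaining error $\nabla(\bar u_N - v_i)(X_i)\bar\xi_i$ by converting the pointwise value on the rigid set $\B_i$ into an $L^2(\B_i)$ average (producing the $1/\phi$ factor) and invoking both bounds of Lemma \ref{lemma:reflection method}. The only cosmetic difference is that the paper phrases the pointwise-to-average step as $\nabla\bar u_N(X_i)=\fint_{\B_i}\nabla\bar u_N$ followed by Jensen, whereas you use the exact rigid-motion identity $|\B_i|\,|\nabla w_i(X_i)|^2=\|\nabla w_i\|_{L^2(\B_i)}^2$; both are valid and equivalent here.
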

\begin{proof} 
% We first note that the field $v_j$ is solution in the sense of distribution of 
% \begin{align*}
%     - \Delta v_j + \nabla p_j = \Sigma[v_j,p_j] n \, \mathcal H^2|_{\partial \B_j} \quad \text{in} \quad  \R^3,
% \end{align*}
% hence the following expression in terms of the Oseen tensor
% \begin{align*}
%     v_j(x) = \int_{\partial \B_j} \mathcal{G}(x-y) \Sigma[v_j,p_j](y) n \dd \mathcal H^2(y).
% \end{align*}
We observe that $D \bar u_N = 0$ in $\B_i$ implies $\nabla \bar u_N (X_i) = \fint_{\B_i} \nabla \bar u_N$ which allows us to write
\begin{align*}
     \frac{1}{N} \sum_{i=1}^N \left| \bar \sigma_i \gamma P_{\bar \xi_i^\perp} \bar H(X_i) - \nabla \bar u_N \bar \xi_i \right|^2  &\lesssim
   \frac{1}{N} \sum_{i} \Big|\bar \sigma_i \gamma P_{\bar \xi_i^\perp} \bar H(X_i) - \fint_{\B_i} \nabla v_i \bar \xi_i\Big|^2\\
    &+  \frac{1}{N} \sum_{i} \Big|\fint_{\B_i} \Big(\sum_{j} \nabla v_j -  \bar u_N  \Big) \Big|^2 \\  &+   \frac{1}{N} \sum_{i} \Big| \sum_{j \neq i} \fint_{\B_i} \nabla v_j \bar \xi_i\Big|^2 .
\end{align*}
Regarding the first right-hand side term, we observe that (up to a translation), $v_j$ is the solution $v_{T_j,F_j}$ to \eqref{v_T,F}. 
% \begin{align}
%     F &= \int_{\B_j}  \bar \sigma_j  \nabla (\bar \xi_j \cdot \bar H) \dd x, \\
%     T &= \int_{\B_j}  \bar \sigma_j \bar \xi_j \times  \bar H +  (x - X_j) \times \nabla (\bar \sigma_{j} \bar \xi_j \cdot \bar H(x)) \dd x
% \end{align}
% By the regularity of $\bar H$, we have
% \begin{align}
%     |T - r^3 |\B| \bar \sigma_j \bar \xi_j \times  \bar H(X_j)| \lesssim r \|\bar H\|_{W^{1,\infty}}.
% \end{align}
% remind that we know from the one particle computation that 
% \begin{align*}
%     \fint_{\B_i} \nabla v_i \, \bar \xi_i = \dot{\bar \xi_i}  =  \frac{ \gamma \sigma_i}{|\B|} \Big( \bar \xi_i \times \big( \bar \xi_i \times \int_{\mathcal{B}} H(t,X_i + r y) \dd y \big) + r \bar \xi_i \times \big( \int_{\mathcal{B}} y \times (\nabla H(t,X_i + r y) \bar \xi_i) \dd y \big) \Big).
% \end{align*}
% Using a Taylor expansion on $H$, we get as $r$ goes to zero:
% \begin{align*}
%     \fint_{\B_i} \nabla v_i \, \bar \xi_i = \sigma_i \theta(t,X_i,\bar \xi_i) + \O(r).
% \end{align*}
% Hence there exists a constant $M$ such that 
Hence, by \eqref{angular.single.2} 
\begin{align*}
   \bar \sigma_i \gamma P_{\bar \xi_i^\perp} \bar H(X_i) = \fint_{\B_i} \nabla v_i \bar \xi_i.
\end{align*} 

Moreover, we estimate
\begin{align*}
     &\frac{1}{N} \sum_{i} \Big|\fint_{\B_i} \nabla \Big(\sum_{j} \nabla v_j -  \bar u_N  \Big)  \bar \xi_i\Big|^2 +   \frac{1}{N} \sum_{i} \Big| \sum_{j \neq i} \fint_{\B_i} \nabla v_j \bar \xi_i\Big|^2 \\
     &\leq \frac 1 \phi \left(\|\nabla \bar u_N - \sum_j \nabla v_j\|_{L^2(\cup_i \B_i)}^2 + \sum_{i} \| \sum_{j \neq i} \nabla v_j\|_{L^2( \B_i)}^2 \right) \\
     &\lesssim \phi^2
\end{align*}
due to Lemma \ref{lemma:reflection method}.
\end{proof}

\section{Effective orientation dynamics for fast spin flip} \label{sec:relaxation}

% For any $t>0$, we consider the probability measure $p_\ast \in \mP({(\S^2)^N} \times \{ \pm 1 \}^N)$
% \begin{align*}
%     p_\ast(t,\dd \bar \xi, \dd \bar \sigma):=
%     \bigotimes_{i=1}^N \Big(\delta_{\xi^\ast_i(t)}(\d \bar \xi_i) \otimes \Big(m_i^{\ast,+}(t) \delta_{+1}(\d \bar \sigma_i) + m_i^{\ast,-}(t) \delta_{-1}(\d \bar \sigma_i)\Big) \Big),
% \end{align*}
We recall the definition of $m^{\pm}$  from \eqref{m_pm} and denote

where $ \xi^\ast_1(t), \dots, \xi^\ast_N(t)$ are the orientations defined through the following ODEs
\begin{align}
\label{equation orientation one particle}
   \left\{ \begin{array}{l}
    \dot \xi^\ast_i(t) = \gamma m^0(t,X_i,\xi_i^\ast(t)) P_{(\xi^\ast_i(t))^\perp} H(t,X_i) =: m_i^{\ast,0}(t) V_i^\ast(t,\xi_i^\ast(t)),\\
    \xi^\ast_i(0) = \xi_{i}^0 .
    \end{array} \right.
\end{align}
with the notation
\begin{align}
    m_i^{\ast,\pm}(t) = m^{\pm}(t,X_i,\xi_i^\ast(t)), \qquad    m_i^{\ast,0}(t) =  m^0(t,X_i,\xi_i^\ast(t)).
\end{align}
Note that $m_i^{\ast,0}$ and $V_i^\ast$ only depend on $\xi_i^\ast$ such that the ODEs are decoupled.
Moreover, we denote 
\begin{align} \label{def.D_i}
    D_i(t) = \left|\P[\sigma_i(t) = +1] - m_i^{\ast,+}(t)\right| .
\end{align}

% For both $p_\e$ the solution to \eqref{generator micro system} and $p_\ast$, we define the marginals with respect to the particle orientations,
% \begin{align}
%     q_\e(t,\cdot) = \int_{\{\pm 1\}^N} p_\e(t,\cdot, \d \bar \sigma), \qquad q_\ast(t,\cdot) = \int_{\{\pm 1\}^N} p_\ast(t,\bar \xi, \d \bar \sigma)=  \delta_{\xi^\ast(t)}.
% \end{align}

\begin{prop}
\label{proposition Estimate for the fast spinning regime}
% Consider the following Wasserstein distances:
% \begin{align*}
%     W(t) := W_2^2(p_\ast(t),p_\eps(t)), \qquad 
%     \mathcal{W}(t) := W_2^2(q_\ast(t),q_\eps(t)) 
% \end{align*}
For any $t< T$ and $1 \leq i \neq j \leq N$, 
\begin{align} 
\label{control wasserstein microscopic}
     \E [|\xi(t) -\xi^\ast(t)|^2]  &\lesssim  (\eps  + \phi^2) Nt, \\ \label{control wasserstein microscopic p_eps}
     \sum_i D_i^2(t)
     &\lesssim  N\left(e^{\tfrac{-ct}\eps}    + \eps + \phi^2\right), \\
     |\Cov(\sigma_i,\sigma_j)|^2 &\lesssim  e^{\tfrac{-ct}\eps} +  \frac 1 \eps \int_0^t e^{\tfrac{-c(t-s)}\eps}\Big(\E[|\xi_i(s) - \xi_i^\ast(s)|^2] + \E[|\xi_j(s)- \xi_j^\ast(s)|^2]\Big) \dd s.\qquad \label{est.Cov}
\end{align}
\end{prop}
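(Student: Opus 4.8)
The plan is to work directly from the PDE formulation \eqref{generator micro system} for $p_\eps$, extracting closed (or almost closed) ODE systems for the relevant moments. For the third estimate \eqref{est.Cov}, I would first derive an evolution equation for the joint law of $(\sigma_i,\sigma_j)$ (equivalently for $\P[\sigma_i = +1, \sigma_j = +1]$, and the analogous pair probabilities) by testing \eqref{generator micro system} against functions depending only on the spins of particles $i$ and $j$. The spin-flip operator contributes a term of size $\eps^{-1}$ that is dissipative in the two-particle spin variable, while the transport term $\dv_{\bar\xi}(V p_\eps)$ couples to the orientations. The key observation is that if we replace $\xi$ by $\xi^\ast$ everywhere, the resulting two-particle spin chain has the product $m_i^{\ast,+} m_j^{\ast,+}$ as its (instantaneous) invariant law because the ODEs \eqref{equation orientation one particle} are decoupled; hence $\Cov(\sigma_i,\sigma_j)$ is forced to $0$ by the $\eps^{-1}$-dissipation, up to (i) the initial transient $e^{-ct/\eps}$ coming from deterministic initial data, and (ii) the error from swapping $\lambda_i(t,X_i,\xi_i)$ for $\lambda_i(t,X_i,\xi_i^\ast)$, which is controlled pointwise in time by $|\xi_i - \xi_i^\ast|$ since $\lambda^\pm$ is Lipschitz in $\zeta$ (as $H \in W^{2,\infty}$). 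A Grönwall/Duhamel argument in the fast variable then yields exactly \eqref{est.Cov}, with the convolution against $\eps^{-1} e^{-c(t-s)/\eps}$ encoding the accumulation of this Lipschitz error through the relaxation semigroup.

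For \eqref{control wasserstein microscopic p_eps}, the same scheme applied to the single-particle spin marginal gives an ODE for $\P[\sigma_i(t)=+1]$ whose $\eps^{-1}$-part relaxes it toward $m^+(t,X_i,\xi_i(t))$; combining with a triangle inequality $|\P[\sigma_i=+1] - m_i^{\ast,+}| \leq |\P[\sigma_i=+1] - \E[m^+(t,X_i,\xi_i)]| + \E|m^+(t,X_i,\xi_i) - m^+(t,X_i,\xi_i^\ast)|$ and summing over $i$, the first piece is $\lesssim e^{-ct/\eps}$ plus accumulated Lipschitz error and the second is $\lesssim \E|\xi_i - \xi_i^\ast|$; feeding in \eqref{control wasserstein microscopic} (already to be proven) and Cauchy–Schwarz turns $\sum_i \E|\xi_i-\xi_i^\ast|$ into $N^{1/2}(\E|\xi-\xi^\ast|^2)^{1/2} \lesssim N(\eps+\phi^2)^{1/2}\cdot t^{1/2}$, which after squaring and absorbing is consistent with the stated right-hand side $N(e^{-ct/\eps} + \eps + \phi^2)$.

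The genuinely delicate estimate is \eqref{control wasserstein microscopic}, and this is where I expect the main obstacle. Set $\mathcal W(t) = \E[|\xi(t) - \xi^\ast(t)|^2]$. Differentiating and using $\dot\xi_i = \nabla u_N(t,X_i)\xi_i$ versus $\dot\xi_i^\ast = \gamma m_i^{\ast,0} P_{(\xi_i^\ast)^\perp} H$, one splits $\dot{\mathcal W}$ into: a term comparing $\nabla u_N(X_i)\xi_i$ with $\sigma_i \gamma P_{\xi_i^\perp}H(X_i)$ — controlled in $\ell^2$-over-particles by Proposition \ref{prop:angular.velocity}, contributing the $\phi^2 N$; a Lipschitz term $|\xi_i - \xi_i^\ast|^2$ absorbed by Grönwall; and — crucially — a term of the form $\frac1N\sum_i (\sigma_i - m_i^{\ast,0})\,\langle\xi_i - \xi_i^\ast, \gamma P_{\xi_i^\perp}H(X_i)\rangle$ in expectation, which does \emph{not} carry any smallness a priori: the spins $\sigma_i$ are $O(1)$ and only their \emph{time averages} are close to $m_i^{\ast,0}$. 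The resolution, as flagged in the introduction, is that this leading term is itself the time derivative of something small: one writes it as $\frac{d}{dt}[\text{boundary term}] - [\text{its derivative}]$, and the derivative of the offending quantity \emph{is} dissipative because $\partial_t \P[\sigma_i=+1]$ picks up the $\eps^{-1}(m_i^+ - \P[\sigma_i=+1])$ contraction, producing a factor that, integrated, gives $\eps \cdot (\text{bounded})$ rather than $O(1)$. Carefully organizing this "integration by parts in time" so that every error is either (i) Grönwall-absorbable into $\mathcal W$, (ii) of order $\phi^2 N$ via Proposition \ref{prop:angular.velocity}, or (iii) of order $\eps N$ via the spin dissipation, and checking that the auxiliary boundary terms are themselves $\lesssim \eps N + \mathcal W$, is the crux of the argument; the bound \eqref{control wasserstein microscopic} then follows from Grönwall on $[0,T]$ with $T$-dependent constants.
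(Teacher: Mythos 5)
Your plan follows essentially the same route as the paper: all three bounds are extracted from moment ODEs obtained by testing the generator — \eqref{est.Cov} and \eqref{control wasserstein microscopic p_eps} via $\eps^{-1}$-dissipative ODEs for the pair law and the single-spin marginal with Lipschitz errors in $|\xi-\xi^\ast|$ fed back through \eqref{control wasserstein microscopic} — and \eqref{control wasserstein microscopic} itself via the decomposition of $\dot{\mathcal W}$ into a Proposition \ref{prop:angular.velocity} term of size $N\phi^2$, a Gr\"onwall-absorbable term, and the non-small correlation term whose own time derivative is dissipative. The one point to implement carefully is that this dissipation must be read off from the generator acting on the full correlated observable $S=\sum_i\E\big[(\sigma_i-m_i^{\ast,0})\,(\xi_i-\xi_i^\ast)\cdot V_i^\ast\big]$ via the identity $\bar\sigma_i\lambda_i^{\bar\sigma_i}=\tfrac12(\lambda_i^++\lambda_i^-)(\bar\sigma_i-m_i^0)$, not merely from the relaxation of the marginal $\P[\sigma_i=+1]$ (since $\sigma_i$ and $\xi_i$ are correlated); the paper does this by a Gr\"onwall inequality for $|S|$, which is an equivalent bookkeeping to your integration by parts in time.
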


\begin{rem} 
    The factor $N$ simply arises from the dimension of the state space $(S^2)^N \times \spinstate^N$. 

    Naively, one might expect the error with respect to $\eps$ to be of order $\eps^2$. However, owing to the central limit theorem, the expected time the spin $\sigma_i$ spends in state $+1$ during a unit time interval is $m^+_i$ (assuming $m^+_i$ was constant) but   has fluctuations of order $\sqrt{\eps}$. Therefore, the average velocity in that time interval will have fluctuations around $m^0_i V^\ast_i$ of the same order. This explains the error of order $\eps$ in \eqref{control wasserstein microscopic}. 

    % The error of order $\sqrt \e$ in \eqref{control wasserstein microscopic p_eps} is a consequence of the error of order $\eps$ in \eqref{control wasserstein microscopic} and the $\xi$ dependence of $m^+$. Indeed, for orientations $ \xi_i$ with $\xi_i - \xi_i^\ast = O(\sqrt \e)$, we have $m_i^0(\xi_i) - m_i^0(\xi_i^\ast) = O(sqrt \eps)$
\end{rem}

% \begin{rem}
% \label{remark random density}

% \begin{enumerate}    
% We further denote
% $$\rho_\e(t,\bar \xi):= \int_{\{\pm 1\}^N} p_\e(t,\bar \xi, \sigma)\dd \sigma = \sum_{\sigma \in \{\pm 1\}^N} p_\e^\sigma(t,\bar \xi),$$
% as well as $\rho_0(t):=\delta_{  \bar \xi =  \bar \xi^\ast(t)} $.
% Denoting $\mathcal{W}$ the following Wasserstein distance:
% \begin{align*}
%     \mathcal{W}(t) := W_2^2(\rho_0(t),\rho_\eps(t)) = \int_{(\S^2)^N} |\bar \xi - \bar \xi^\ast(t)|^2 \dd \rho_\eps(t, \dd \bar \xi) =  \int_{{(\S^2)^N} \times \{\pm 1\}^N} |\bar \xi - \bar \xi^\ast(t)|^2  \dd p_\eps(t,\dd \bar \xi,\dd \sigma),
% \end{align*}
%  we obtained a a byproduct of our result that the following inequality is satisfied for any $t< T$, $N>0$,
% \begin{align} \label{control wasserstein microscopic}
%      \mathcal{W}(t) \lesssim  \big((\eps  + \phi^3) Nt   +\mathcal{W}(0) \big) e^{C_3t}, 
% \end{align}
% for a constant $C_3$ independant of $N,t,\phi, \e$.
% \end{enumerate}

% \end{rem}

\begin{proof} \textbf{Step 1:} \emph{Proof of \eqref{control wasserstein microscopic}.} \\
\textbf{Substep 1.1:} \emph{Structure of the proof.}
% We observe that since $q_\ast(t,\cdot)$ is concentrated in a single point in $(\S^2)^N$, i.e. $q_\ast(t,\cdot) = \delta_{\xi^\ast(t)}$, there is precisely one transport plan from $q_\ast(t,\cdot)$ to $q_\eps(t,\cdot)$, given by $q_\ast(t,\cdot) \otimes q_\e(t,\cdot)$. In particular
We have
\begin{align} \label{W}
   \mathcal{W}(t) := \E [|\xi(t) -\xi^\ast(t)|^2]  =\int_{{(\S^2)^N} \times \{\pm 1\}^N} |\bar \xi -  \xi^\ast(t)|^2  p_\eps(t,\d \bar \xi,\d \bar \sigma).
\end{align}

The idea is to derive inequalities for the time derivative of $\mathcal W$ and conclude by Gronwall's inequality. However,  the relaxation mechanism is due to the fast spin flip which only occurs through the second time derivative.

More precisely, to leading order, the first time derivative of $\mathcal W$ only reflects the (deterministic) ODEs for the particle orientation given through \eqref{ODE.Xi} and \eqref{equation orientation one particle}. Hence, we need an estimate for $V_i - m_i^{\ast,0} V^\ast_i$. Even though we already have good estimates for $V_i - \sigma_i V_i^\ast$ through Proposition \ref{prop:angular.velocity}, $\sigma_i - m_i^{\ast,0}$ will not be small. However, we expect smallness of $\langle \sigma_i \rangle - m_i^{\ast,0}$ where $\langle \sigma_i \rangle$ denotes the average with respect to $p_\eps$. Indeed, this is the relaxation mechanism through the fast spin flip.

To make this reasoning rigorous, we will show that
\begin{align}
    \dot{\mathcal{W}}  = 2(S + R_1 + R_2),
\end{align}
where $S$ satisfies
\begin{align}
     \frac{\d}{\d t} |S| \leq - \frac{c}{\eps} |S| + |R_3| + \frac{|R_4|}{\eps},
\end{align}
and where $R_k$, $1\leq k \leq 4$ are remainder terms.

Through application of Gronwall's inequality, we will conclude \eqref{control wasserstein microscopic}. \\[2mm]
\textbf{Substep 1.2:} \emph{Time derivative of $\mathcal{W}$.}
% In the inequalities below, we omit to specify the $(t,x)$ dependence of $m_0, \lambda^+, \lambda^-$ and $\theta$, as well as the $(t,\sigma)$ dependence of $\tilde \theta_i$. 
We note that \eqref{generator micro system} means that for any $\varphi \in C^1([0,T] \times (\S^2)^N \times \spinstate^N)$, we have for any $t \in [0,T]$
\begin{align}
    \int_{{(\S^2)^N} \times \{\pm 1\}^N} \varphi(t,\cdot) \dd p_\eps(t,\cdot) 
    &=  \int_{{(\S^2)^N} \times \{\pm 1\}^N} \varphi(0,\cdot) \dd p_\eps(0,\cdot) \\
    & \hspace{-3cm}+ \int_0^t\int_{{(\S^2)^N} \times \{\pm 1\}^N} \left((\partial_t + V \cdot \nabla_{\bar \xi})  \varphi  +
     \frac 1 \eps \sum_i \left(\lambda_i  ( \tau_i \varphi -  \varphi) \right)  \right)
   \dd p_\eps(s,\cdot) \dd s. 
\end{align}
Here, we used that the operator $\tau_i$ that acts on functions via \eqref{tau_i.operator} satisfies $\tau_i^{-1} = \tau_i$.
In particular,
\begin{align*}
    \frac {\dd}{\dd t} \int_{{(\S^2)^N} \times \{\pm 1\}^N} \varphi(t,\cdot) \dd p_\eps(t,\cdot) = \int_{{(\S^2)^N} \times \{\pm 1\}^N} \left((\partial_t + V \cdot \nabla_{\bar \xi})  \varphi  +
     \frac 1 \eps \sum_i \left(\lambda_i  ( \tau_i \varphi -  \varphi) \right) \right)  \dd p_\eps(t,\cdot).
\end{align*}
% Using the transport plan ...
% \begin{multline*}
%     \gamma( \bar \xi_1, \sigma_1, \bar \xi_2, \sigma_2) = \bigotimes_{i} \left[ \Big(  \otimes (\delta_{\xi_{2,i = \xi^\ast_i(t)}}) \Big) \otimes \Big( \tfrac{\lambda^+}{ \lambda^+ + \lambda^-} \delta_{\sigma_{i,1} = 1} \otimes \delta_{\sigma_{2,i}}  + (p_\e(\bar \xi) - \tfrac{\lambda^+}{\lambda^+ + \lambda^-}) \delta_{\sigma_{i,1} = 1} \otimes \delta_{\sigma_{2,i}}  \Big) \right]
% \end{multline*} we have 
% We start of the analysis of $\mathcal{W}$, which corresponds to the Wasserstein distance between the two measures $\rho_\e(t)$ and $\rho_0(t)$ defined in Remark \ref{remark random density}. 
In the following, we omit for shortness all $t$ dependencies, and write an index $i$ instead of writing the dependence on $X_i$. E.g. $H_{i}$ is short for $H(t,X_i)$ and $\lambda^{\bar \sigma_i}_i(\bar \xi_i)$ is short for $\lambda^{\bar \sigma_i}(t,X_i,\bar \xi_i(t))$.
Using that $\tau_i \varphi -  \varphi = 0$ for all $\varphi$ which are independent of $\bar \sigma$, we find
\begin{align} \label{ODE.mW}
\begin{aligned}
     \dot{\mathcal{W}}  &=  2 \sum_i \int_{{(\S^2)^N} \times \{\pm 1\}^N}  (\bar \xi_i -  \xi_i^\ast) \cdot \Big(V_i(\bar \xi,\bar\sigma) -  m^0_i(\xi^\ast_i) V^\ast_i(\xi^\ast_i)\Big) \dd p_\e\\
     & =2 \sum_i \int_{{(\S^2)^N} \times \{\pm 1\}^N}  (\bar \xi_i -  \xi^\ast_i) \cdot \Big(( \bar \sigma_i - m^0_i(\xi^\ast_i))  V^\ast_i(\xi^\ast_i)\Big) \dd p_\e\\
    & + 2 \sum_i \int_{{(\S^2)^N} \times \{\pm 1\}^N}  (\bar \xi_i -  \xi^\ast_i) \cdot \Big(\bar \sigma_i (V^\ast_i(\bar \xi_i) -  V^\ast_i( \xi^\ast_i)\Big) \dd p_\e\\
    & +  2 \sum_i \int_{{(\S^2)^N} \times \{\pm 1\}^N}  (\xi_i -  \xi^\ast_i) \cdot \Big(V_i(\bar \xi,\bar\sigma) -  \bar \sigma_i  V^\ast_i(\bar \xi_i)\Big) \dd p_\e\\
    & := 2(S + R_1 + R_2).
    % &  = 2 \sum_{i} \int_{{(\S^2)^N} \times \{\pm 1\}^N} \tilde \theta_i(\bar \xi) (\xi_i - \xi^\ast_i(t)) \dd p_\e\\
    %  &=   2 \sum_i \int_{{(\S^2)^N} \times \{\pm 1\}^N}  (\xi_i -  \xi^\ast_i(t)) (  \tilde \theta_i( \bar \xi^\ast(t)) - m_0(\xi^\ast_i(t)) \theta (\xi^\ast_i(t))) \dd p_\e\\
    % & +  2 \sum_i \int_{{(\S^2)^N} \times \{\pm 1\}^N}  (\xi_i -  \xi^\ast_i(t))  (\tilde \theta_i( \bar \xi) - \tilde \theta_i(\bar \xi^\ast(t))) \dd p_\e.
    \end{aligned}
\end{align}
We recall the definition of $V$ from \eqref{ODE.Xi}.  Note that from Proposition \ref{prop:angular.velocity}, we have for all $(\bar \xi,\bar \sigma) \in (\S^2)^N \times \spinstate^N$
\begin{align*}
    \frac{1}{N} \sum_{i=1}^N \left|V_i(\bar \xi,\bar \sigma) -  \bar \sigma_i V_i^\ast(\bar \xi_i) \right|^2  \lesssim \phi^2.
\end{align*}

% We introduce the following remainders $R_i( \bar \xi,\sigma):=   \tilde \theta_i (\bar \xi, \sigma)- \sigma_i \theta(\xi_i)$, which allows us to write 
% \begin{align*}
% \dot{\mathcal{W}}(t) = & 2 \sum_i \int_{{(\S^2)^N} \times \{\pm 1\}^N}  (\xi_i -  \xi^\ast_i(t)) ( \sigma_i - m_0(\xi^\ast_i(t))) \theta(\xi^\ast_i(t))) \dd p_\e\\
%     & + 2 \sum_i \int_{{(\S^2)^N} \times \{\pm 1\}^N}  (\xi_i -  \xi^\ast_i(t)) \sigma_i (  \theta( \xi_i) - \theta(\xi^\ast_i(t))) \dd p_\e\\
%     & +  2 \sum_i \int_{{(\S^2)^N} \times \{\pm 1\}^N}  (\xi_i -  \xi^\ast_i(t)) R_i(\bar \xi^\ast(t), \sigma) \dd p_\e\\
%     & +  2 \sum_i \int_{{(\S^2)^N} \times \{\pm 1\}^N}  (\xi_i -  \xi^\ast_i(t))  (R_i(\bar \xi,\sigma) - R_i(\bar \xi^\ast(t),\sigma)) \dd p_\e\\
%     & := 2(S + R_1 + B_1 + B_2).
% \end{align*}
% In order to bound $B_1$, we write using Young inequality and the estimates for the remainders from Lemma \ref{lemma:reflection method}.
Hence, 
\begin{align} \label{est.R_2}
\begin{aligned}
    |R_2| & \lesssim  \int_{{(\S^2)^N} \times \{\pm 1\}^N} |\bar \xi - \xi^\ast(t)|^2 \dd p_\e +  \sup_{(\bar \xi,\bar \sigma)} \sum_{i=1}^N \left|V_i(\bar \xi,\bar \sigma) -  \bar \sigma_i \gamma P_{\bar \xi_i^\perp} H_{i} \right|^2
    % & \lesssim \int_{{\S^2} \times \{\pm 1\}} |\bar \xi - \bar \xi^\ast(t)|^2 \dd \rho_\e + N \phi^3 \|H\|_{L_{t,x}} \\
     \lesssim \mathcal{W}(t) + N \phi^2 .
    \end{aligned}
\end{align}\\[2mm]
%Similarly, we have $|B_2| \leq \mathcal{W}(t) + N \phi^3$.
\textbf{Substep 1.3:} \emph{Time derivative of $S$.}
 We compute 
\begin{align*}
    \dot S(t) & = \sum_i \int_{{(\S^2)^N} \times \{\pm 1\}^N} (\partial_t +  V_i(\bar \xi,\bar \sigma) \cdot \nabla_{\bar \xi_i})  \left[  (\bar \xi_i -  \xi^\ast_i) \cdot \Big(( \bar \sigma_i - m^0_i(\xi^\ast_i)) V_i^\ast(\xi_i^\ast)\Big) \right]  \dd p_\e \\
    & - \frac 2 \eps \sum_i \int_{{(\S^2)^N} \times \{\pm 1\}^N} \bar \sigma_i \lambda^{\bar \sigma_i}_i(\bar \xi_i)(\bar \xi_i - \xi^\ast_i(t)) \cdot  V_i^\ast( \xi^\ast_i) \dd p_\e.
\end{align*}
We observe that for any $i \in \{1, \dots, N\}$, the definition $m^0$ in \eqref{m^0} together with \eqref{m_pm} directly implies
\begin{align}
    \bar \sigma_i \lambda_i^{\bar \sigma_i}(\bar \xi_i) = (\lambda^+_i(\bar \xi_i) + \lambda^-_i(\bar \xi_i)) \frac{\bar \sigma_ i - m^0_i(\bar \xi_i)}{2},
\end{align}
such that 
\begin{align}
     & -2 \int_{{(\S^2)^N} \times \{\pm 1\}^N} \bar \sigma_i \lambda^{\bar \sigma_i}_i(\bar \xi_i)(\bar \xi_i - \xi^\ast_i(t)) \cdot  V_i^\ast( \xi^\ast_i) \dd p_\e \\
     &=  (\lambda^+_i + \lambda^-_i)(\xi^\ast_i) \int_{{S^2}^N \times \{\pm 1\}^N} (m^0_i(\xi^\ast_i) - \bar \sigma_i)) (\bar \xi_i - \xi^\ast_i(t)) \cdot V_i^\ast( \xi^\ast_i)  \dd p_\eps \\
     & + \int_{\S^2 \times \{\pm 1\}} \left[(\lambda^+_i + \lambda^-_i)(\bar \xi_i)( m^0_i(\bar \xi_i) - \sigma_i) -(\lambda^+_i + \lambda^-_i)(\xi^\ast_i)( m_i^0(\xi^\ast_i) - \bar \sigma_i )  \right] (\bar \xi_i - \xi^\ast_i) \cdot  V_i^\ast( \xi^\ast_i)   \dd p_\eps.
\end{align}
We observe that by definition of $\lambda^{\pm}$ in \eqref{def:intensities lambda}, we have
\begin{align} \label{lambda.below}
\lambda^{+} + \lambda^- \geq 1.
\end{align}
Hence, (for $S \neq 0$) 
\begin{align} \label{est.S}
     \frac{\d}{\d t} |S| \leq - \frac{1}{\eps} |S| + |R_3| + \frac{|R_4|}{\eps},
\end{align}
where 
\begin{align*}
    R_3 :=\sum_i \int_{{(\S^2)^N} \times \{\pm 1\}^N} (\partial_t +  V_i(\bar \xi,\bar \sigma) \cdot \nabla_{\bar \xi_i})  \left[  (\bar \xi_i -  \xi^\ast_i) \cdot \Big(( \bar \sigma_i - m^0_i(\xi^\ast_i)) V_i^\ast(\xi_i^\ast)\Big) \right]  \dd p_\e,
\end{align*}
and
\begin{multline*}
     R_4 :=  \int_{\S^2 \times \{\pm 1\}} \left[(\lambda^+_i + \lambda^-_i)(\bar \xi_i)( m^0_i(\bar \xi_i) - \sigma_i) -(\lambda^+_i + \lambda^-_i)(\xi^\ast_i)( m_i^0(\xi^\ast_i) - \bar \sigma_i )  \right]\\ (\bar \xi_i - \xi^\ast_i) \cdot  V_i^\ast( \xi^\ast_i)   \dd p_\eps.
\end{multline*}
We observe that 
\begin{align}
    |R_1| +   |R_4| &\lesssim  \mathcal{W}, \label{est.R_1.R_4} \\
   |R_3| &\lesssim N.  \label{est.R_3}
   % \\
   %   |S| &\lesssim \mathcal{W}^{1/2}. \label{S.trivial}
\end{align}\\[2mm]
\textbf{Substep 1.4:} \emph{Conclusion.}
We observe that $\xi_i(0) = \xi_\ast(0)$ implies $S(0) = )$. Hence, by  Gronwall's inequality, estimates \eqref{est.S},  \eqref{est.R_1.R_4} and \eqref{est.R_3} yield  
\begin{align}
    |S(t)| \lesssim  \int_0^t e^{-\frac {c(t-s)} \eps} \left(\frac{  \mathcal{W}(s) }{\eps} + N\right) \dd s.
\end{align}
Inserting that into \eqref{ODE.mW} together with the estimates \eqref{est.R_2}, \eqref{est.R_1.R_4} and $\mathcal W(0) = 0$, Young's inequality yields
\begin{align}
    \mathcal{W}(t) &\lesssim   \int_0^t (N\phi^2  + \mathcal W(s)) \dd s + \int_0^t \int_0^s e^{-\frac {c(s-s')} \eps} \left(\frac{  \mathcal{W}(r) }{\eps} + N\right) \dd s' \dd s   \\
    & \lesssim  \eps^2  +  \int_0^t \left(  \mathcal{W}(s') + \eps N + N\phi^2\right) \dd s.
\end{align}
By Gronwall's inequality we conclude
\begin{align}
    \mathcal{W}(t) \lesssim  (\eps  + \phi^2) Nt  . 
\end{align}\\[2mm]
\textbf{Step 2:} \emph{Proof of \eqref{control wasserstein microscopic p_eps}.}
% Regarding $(p_\ast(t,\cdot),p_\e(t,\cdot))$, we consider the transport plan
% \begin{align}
%     \gamma = (q_\ast \otimes q_\e) \bigotimes_i \gamma_i
% \end{align}
% where, omitting time dependencies, 
% \begin{align}
%     \gamma_i &= \min\Big\{m_i^{\ast,+},p_\eps(\{\bar \sigma_i = +1\})\Big\} \delta_{+1} \otimes \delta_{+1} + \min\Big\{m_i^{\ast,-},p_\eps(\{\bar \sigma_i = -1\})\Big\} \delta_{-1} \otimes \delta_{-1} \\
%     & + \max\Big\{0, m_i^{\ast,+} - p_\eps(\{\bar \sigma_i = +1\})\Big\} \delta_{+1} \otimes \delta_{-1}
%     + \max\Big\{0, m_i^{\ast,-} - p_\eps(\{\bar \sigma_i = -1\})\Big\} \delta_{-1} \otimes \delta_{+1}.
% \end{align}
% Here, 
% \begin{align}
%     p_\eps(\{\bar \sigma_i = \pm 1\}) = \int_{(\S^2)^N \times \spinstate^N} \1_{\bar \sigma_i = \pm 1} \dd p_\eps .
% \end{align}
% Observing that
% \begin{align*}
%     m_i^{\ast,-} - p_\eps(\{\bar \sigma_i = -1\} = - (m_i^{\ast,+} - p_\eps(\{\bar \sigma_i = +1\})),
% \end{align*} 
% this yields
% \begin{align*}
%       W \leq \int_{((\S^2)^N \times \spinstate^N)^2} |\bar \xi - \xi_\ast|^2 + |\bar \sigma - \bar \sigma_\ast|^2 \dd \gamma(t)  = \mathcal{W}(t) + 4 \sum_i D_i
% \end{align*}
% where
% \begin{align}
%     D_i= |m_i^{\ast,+} - p_\eps(\{\bar \sigma_i = +1\})|.
% \end{align}
% Thus, it remains to estimate $\sum_i D_i$.
We compute, 
\begin{align}
 \frac {\d}{\d t} \P[\sigma_i = +1] &= \frac {\d}{\d t} \int_{{(\S^2)^N} \{\pm 1\}^N} \1_{\bar \sigma_i =1} \dd p_\eps \\
 &= \frac 1 \eps \int_0^t \int_{{(\S^2)^N} \times \{\pm 1\}^N} \lambda_i  (\1_{\bar \sigma_i = -1} - \1_{\bar \sigma_i = 1}) \dd p_\eps\\
&= \frac 1 \eps \int_0^t \int_{{(\S^2)^N} \times \{\pm 1\}^N} \lambda^-_i(\bar \xi_i)  \1_{\bar \sigma_i =-1} - \lambda^+_i(\bar \xi_i) \1_{\bar \sigma_i = 1}\dd p_\e \\
& = \frac 1 \eps  \bigg(\int_{{(\S^2)^N} \times \{\pm 1\}^N} (\lambda^-(\bar \xi_i) - \lambda^-(\xi^\ast_i))  \1_{\bar \sigma_i =-1}  - (\lambda^+(\xi_i) - \lambda^+(\xi^\ast_i)) \1_{\bar \sigma_i =1} \dd p_\eps \\
& + \lambda^-(\xi^\ast_i) - (\lambda^+(\xi^\ast_i) + \lambda^-(\xi^\ast_i))  \int_{{(\S^2)^N} \times \{\pm 1\}^N} \1_{\bar \sigma_i =1} \dd p_\eps  \bigg) ,
\end{align}
where we used in the last identity that $p_\eps(\{\bar \sigma_i = +1\}) = 1 - p_\eps(\{\bar \sigma_i = -1\})$. Furthermore, since $m_i^{\ast,+} = \frac{\lambda_i^-(\xi^\ast_i)}{\lambda_i^+(\xi^\ast_i) + \lambda_i^-(\xi^\ast_i)}$ and using \eqref{lambda.below}, we deduce
% Using $   \int_{{(\S^2)^N} \times \{\pm 1\}^N} \dd p_\eps 1_{\bar \sigma_i =-1} = 1 -    \int_{{(\S^2)^N} \times \{\pm 1\}^N} \dd p_\eps 1_{\bar \sigma_i =1}$, we get 
% \begin{multline*}
%    \int_{{(\S^2)^N} \times \{\pm 1\}^N} \dd p_\eps 1_{\bar \sigma_i = 1}  = \frac 1 \eps  \int_0^t \Big[\int_{{(\S^2)^N} \times \{\pm 1\}^N} (\lambda^+(\xi_i) - \lambda^+(\xi^\ast_i)) \dd p_\eps 1_{\bar \sigma_i =-1} \\ - (\lambda^-(\xi_i) - \lambda^-(\xi^\ast_i)) \dd p_\eps 1_{\bar \sigma_i =1} + \lambda^+(\xi^\ast_i) - (\lambda^+(\xi^\ast_i) + \lambda^-(\xi^\ast_i))  \dd p_\eps 1_{\bar \sigma_i =1} \Big] \dd s 
% \end{multline*}
\begin{align}
    \frac {\dd }{\dd t} D_i^2& = - 2\frac { \lambda^+(\xi^\ast_i) + \lambda^-(\xi^\ast_i)} \eps  D_i^2 +  2 \left(\int_{{(\S^2)^N} \{\pm 1\}^N} \1_{\bar \sigma_i =1} \dd p_\eps   -m_i^{\ast,+} \right) \\ & \times  \left(\frac 1 \eps \bigg(\int_{{(\S^2)^N} \times \{\pm 1\}^N} (\lambda^-(\bar \xi_i) - \lambda^-(\xi^\ast_i))  \1_{\bar \sigma_i =-1}  - (\lambda^+(\xi_i) - \lambda^+(\xi^\ast_i)) \1_{\bar \sigma_i =1} \dd p_\eps  +\frac {\dd}{\dd t} m_i^{\ast,+}\right) \\
    &\leq  - \frac {2} \eps  D_i^2 + C D_i \left( 1 + \frac 1 \e \int_{{(\S^2)^N} \times \{\pm 1\}^N} | \bar \xi_i - \xi^\ast_i| \dd p_\e \right).
\end{align}
Let $E = \sum_i D_i^2$. Then, by Young's inequality 
\begin{align}
    \frac {\dd }{\dd t} E \leq - \frac {c} {2\eps}  E + C \left(\frac { \E [|\xi(t) -\xi^\ast(t)|^2]} \eps  + N \eps \right).
\end{align}
By  Gronwall's inequality and \eqref{control wasserstein microscopic}, we find 
\begin{align}
     E(t) \leq E(0) e^{-\frac {ct} \eps} + C N(\eps + \phi^2) \leq N e^{-\frac {ct} \eps} + C N(\eps + \phi^2) .
\end{align}
and hence
\begin{align}
     \sum_i D_i(t) \leq \sqrt N \sqrt{E(t)} \lesssim  N  e^{-\frac {ct} \eps} +  N(\sqrt \eps +  \phi).
\end{align}
\\[2mm]
\textbf{Step 3:} \emph{Proof of \eqref{est.Cov}.}
We introduce the short notation $\langle \varphi \rangle := \E[\varphi]$. Then
\begin{align}
    \frac{\d}{\d t} \langle \sigma_i\sigma_j \rangle &= -\frac 2 \eps \int_{(\S^2)^N \times  \{\pm 1\}^N} \bar \sigma_i \bar \sigma_j (\lambda_i + \lambda_j) \dd p_\eps \\
    &= -\frac 2 \eps \int_{(\S^2)^N \times  \{\pm 1\}^N} \bar \sigma_i \bar \sigma_j (\lambda^{\bar \sigma_i}(\xi^\ast_i)  + \lambda^{\bar \sigma_j}(\xi^\ast_j)) \dd p_\eps + \frac 1 \eps \O\Big(\E[|\xi_i - \xi_i^\ast|] + \E[|\xi_j - \xi_j^\ast|]\Big),
\end{align}
and similarly
\begin{align}
    \frac{\dd}{\dd t} (\langle \sigma_i \rangle \langle \sigma_j \rangle) &=  - \frac{2}{\e} \langle \sigma_i \rangle \int_{(\S^2)^N \times \{ \pm 1\}^N} \bar \sigma_j \lambda^{ \bar \sigma_j}(\xi^\ast_j) \dd p_\e - \frac{2}{\e} \langle \sigma_j \rangle \int_{(\S^2)^N \times \{ \pm 1\}^N} \bar \sigma_i \lambda^{\bar \sigma_i}(\xi^\ast_i) \dd p_\e \\
    & + \frac 1 \eps  \O\Big(\E[|\xi_i - \xi_i^\ast|] + \E[|\xi_j - \xi_j^\ast|]\Big).
\end{align}
In order to compute the right-hand side integrals in these two identities we observe that 
\begin{align}
    \P(\sigma_i = \pm 1) = \frac{1 \pm \langle \sigma_i \rangle}2
\end{align}
and 
\begin{align*}
\P(\sigma_i = 1, \sigma_j = 1) & = \frac{1}{4} \Big( 1+\langle \sigma_i \sigma_j \rangle  + \langle \sigma_i \rangle +\langle \sigma_j \rangle\Big)\\
  \P(\sigma_i = 1, \sigma_j =-1) & = \frac{1}{4} \Big(1 -\langle \sigma_i \sigma_j \rangle  + \langle \sigma_i \rangle -\langle \sigma_j \rangle\Big)\\
   \P(\sigma_i =-1, \sigma_j = 1) & = \frac{1}{4} \Big(1 -\langle \sigma_i \sigma_j \rangle  -\langle \sigma_i \rangle +\langle \sigma_j \rangle\Big)\\
    \P(\sigma_i =-1, \sigma_j =-1)& = \frac{1}{4} \Big(1+ \langle \sigma_i \sigma_j \rangle  - \langle \sigma_i \rangle -\langle \sigma_j \rangle\Big).
\end{align*}
% We compute 
% \begin{align*}
%     & \frac{\dd}{\dd t} \langle \sigma_i \sigma_j \rangle = - \frac{1}{\e} \int_{(\S^2)^N \times \{ \pm 1\}^N} \sigma_i \sigma_j (\lambda^{-\sigma_i}(\xi_i) + \lambda^{- \sigma_j}(\xi_j)) \dd g_\e\\
%     & \leq - \frac{1}{\e} \int_{(\S^2)^N \times \{ \pm 1\}^N} \sigma_i \sigma_j (\lambda^{-\sigma_i}(\zeta_i(t))  + \lambda^{- \sigma_j}(\zeta_j(t))) \dd g_\e + \frac{1}{\e} \sqrt{\mathcal{W}(t)}\\
%     & \leq -\frac{\langle \sigma_i \sigma_j \rangle}{\e}  (\lambda^+(\zeta_i)+\lambda^-(\zeta_i)+ \lambda^+(\zeta_j)+ \lambda^-(\zeta_j)  \\
%     &+ \frac{1}{\e}   (\langle \sigma_j \rangle \lambda^+(\zeta_i)-\langle \sigma_j \rangle \lambda^-(\zeta_i)+ \langle \sigma_i \rangle \lambda^+(\zeta_j)- \langle \sigma_i \rangle \lambda^-(\zeta_j)) + \frac{1}{\e} \sqrt{\mathcal{W}(t)}
% \end{align*}
% On the other hand, we have 
% \begin{align*}
%     & \frac{\dd}{\dd t} \langle \sigma_i \rangle = - \frac{2}{\e} \int_{(\S^2)^N \times \{ \pm 1\}^N} \sigma_i \lambda^{-\sigma_i}(\xi_i) \dd g_\e\\
%      & \frac{\dd}{\dd t} (\langle \sigma_i \rangle \langle \sigma_j \rangle) = - \frac{2}{\e} \langle \sigma_j \rangle \int_{(\S^2)^N \times \{ \pm 1\}^N} \sigma_i \lambda^{-\sigma_i}(\xi_i) \dd g_\e - \frac{2}{\e} \langle \sigma_i \rangle \int_{(\S^2)^N \times \{ \pm 1\}^N} \sigma_j \lambda^{-\sigma_j}(\xi_j) \dd g_\e.
% \end{align*}
A tedious but straightforward calculation then leads to
\begin{align*}
    \frac{\dd}{\dd t} \Cov(\sigma_i,\sigma_j) &= -  \frac{1}{\e} \left(\lambda^+(\xi_i^\ast) + \lambda^-(\xi_i^\ast) + \lambda^-(\xi_j^\ast)  + \lambda^+(\xi_j^\ast) \right) \Cov(\sigma_i,\sigma_j) \\
    &+ \O\Big(\E[|\xi_i - \xi_i^\ast|] + \frac 1 \eps  \E[|\xi_j - \xi_j^\ast|]\Big) .
    % \lesssim - \frac{2}{\e} \langle \sigma_j \rangle \int_{(\S^2)^N \times \{ \pm 1\}^N} \sigma_i \lambda^{-\sigma_i}(\zeta_i(t)) \dd g_\e - \frac{2}{\e} \langle \sigma_i \rangle \int_{(\S^2)^N \times \{ \pm 1\}^N} \sigma_j \lambda^{-\sigma_j}(\zeta_j(t)) \dd g_\e + \frac{1}{\e} \sqrt{\mathcal{W}(t)}\\
    % & \lesssim \frac{\langle \sigma_i \rangle \langle \sigma_j \rangle}{\e} ( \lambda^+(\zeta_i)+\lambda^-(\zeta_i)+ \lambda^+(\zeta_j)+ \lambda^-(\zeta_j) ) - \frac{1}{\e}  ( \langle \sigma_j \rangle \lambda^+(\zeta_i)-\langle \sigma_j \rangle \lambda^-(\zeta_i)+ \langle \sigma_i \rangle \lambda^+(\zeta_j)- \langle \sigma_i \rangle \lambda^-(\zeta_j)).
\end{align*}
Solving this ODE and using \eqref{lambda.below} yields
\begin{align}
     |\Cov(\sigma_i,\sigma_j)| &\lesssim  e^{\tfrac{-ct}\eps} +  \frac 1 \eps \int_0^t e^{\tfrac{-c(t-s)}\eps}\Big(\E[|\xi_i(s) - \xi_i^\ast(s)|] + \E[|\xi_j(s)- \xi_j^\ast(s)|]\Big) \dd s.
\end{align}
Taking the square and using the Cauchy-Schwarz inequality and $\int_0^t e^{\tfrac{-c(t-s)}\eps} \dd s \lesssim \eps$ yields \eqref{est.Cov}.
\end{proof}

\section{Proof of the main result}
 \label{sec:proof.main}

\begin{proof}[Proof of Theorem \ref{main theorem}] 
\textbf{Step 1:} \emph{Proof of \eqref{f_N,f}.} \\
We define
\begin{align} \label{f^ast_N}
    f_N^\ast(t) := \sum_{i=1}^N \delta_{(X_i,\xi^\ast_i(t))},
\end{align}
where $\xi_i^\ast$ are defined in \eqref{equation orientation one particle}. 
Then, 
\begin{align*}
        \mathbb{E} \big[W_2^2(f_N, f )\big] \lesssim  \mathbb{E} \big[W_2^2(f_N, f_N^\ast)\big] + \mathbb{E} \big[W_2^2(f_N^\ast, f) \big].
\end{align*}
By Proposition \ref{proposition Estimate for the fast spinning regime} we have

% Regarding the first term, we use that the density corresponding to the random vector $\bar \xi(t):=(\xi_1(t), \dots, \xi_N(t) )$ is the function $\rho_\e(t,\bar \xi)$ defined in Proposition \ref{proposition Estimate for the fast spinning regime}  (with $g_0(\bar \xi,\bar \Sigma):= \prod_{i=1}^N \delta_{\xi_i = \xi_i^0} \, \delta_{\bar \Sigma_i= \alpha_i^0}$, cf Remark \ref{remark random density}). Hence we can write 
\begin{align} \label{W_2.f_N.f_N^ast}
    \mathbb{E} \big[W_2^2(f_N(t), f_N^\ast(t))\big]  & \leq \mathbb{E} \bigg[ \frac{1}{N} \sum_{i=1}^N |\xi_i(t) - \xi^\ast_i(t)|^2\bigg] 
    % = \frac{1}{N} \int_{{\S^{2}}^N} |\bar \xi - \bar \xi^\ast(t)|^2 \dd \rho_{\e}(t,\bar \xi)\\
    % &
    \lesssim (\eps  + \phi^2) t.
\end{align}
% using the estimate of Proposition \ref{proposition Estimate for the fast spinning regime} with $W(0) = W_2^2(\rho_0(0),\rho_\eps(0))  = 0$.
It remains to estimate $W_2(f_N^\ast, f)$. We note that both $f_N^\ast$ and $f$ are (distributional) solutions to the (deterministic) continuity equation
\eqref{eq:f}.
% \begin{align} \label{equation: limit density}
%     \partial_t f + \dv_\xi ( m_0(t,x,\xi) f) = 0\\
%     f(0,x,\xi) = f_0(x,\xi)
% \end{align}
% for some given initial datum $f_0 \in \mathcal{P}(\R^3 \times \S^{2})$.
Hence, by a standard stability estimate
\begin{align} \label{W_2.f_N^ast}
   W_2(f_N^\ast(t), f(t)) \leq W_2(f_N^0,f^0) e^{2 L t},
\end{align}
where $L \lesssim 1$ is  the Lipschitz constant of $(x,\zeta) \mapsto \gamma m_0(t,x,\zeta) \gamma P_{\zeta^\perp} H(t,x) $. The original stability estimate goes back to \cite{Dobroushin79} for the $1$-Wasserstein distance. The proof can also be found in \cite[Theorem 3.3.3]{Golse16} and its adaptation to the $2$-Wasserstein distance is straightforward.
% For the second term, notice that $W_2^2(\tfrac{1}{N} \sum_{i=1}^N \delta_{(X_i,\xi^\ast_i(t))}, f(t))$ is a deterministic quantity. Applying Lemma \ref{lemma mean field result orientation}, we get
% \begin{align*}
%     W_2(\tfrac{1}{N} \sum_{i=1}^N \delta_{(X_i,\xi^\ast_i(t))}, f(t)) \lesssim W_2(\tfrac{1}{N} \sum_{i=1}^N \delta_{(X_i,\xi^\ast_i(0))} ,f_0) e^{C_4 t},
% \end{align*}
% and this last quantity vanishes as $N$ goes to infinity using the convergence assumption on the empirical measure.
Combining \eqref{W_2.f_N.f_N^ast} and \eqref{W_2.f_N^ast} yields \eqref{f_N,f}.
\\[2mm]
\textbf{Step 2:} \emph{Proof of \eqref{h_N,h}}. \\
We now turn to the analysis of $\mathbb{E} \big[W_2^2( h_N(t), g(t) )\big]$. 
We introduce the following intermediate measures on $(\R^3 \times \S^2 \times \{\pm 1\})^N$:
\begin{align*}
     h_N^1(t)&:= \frac{1}{N} \sum_{i} \delta_{(X_i, \xi^\ast_i(t), \sigma_i(t))} , \\
     h_N^2(t)&:= \frac{1}{N} \sum_{i} \delta_{(X_i, \xi^\ast_i(t))}  \big(m^+(t,X_i,\xi_i^\ast(t)) \delta_{1} +m^-(t,X_i,\xi_i^\ast(t))  \delta_{-1}\big),
\end{align*} 
and we write 
\begin{align} \label{split.W_2.h_N}
    W_2^2( h_N, h )\lesssim  W_2^2( h_N,  h_N^1 )+ W_2^2( h_N^1,  h_N^2 ) +  W_2^2( h_N^2, h ).
\end{align}
\textbf{Substep 2.1:} \emph{Estimate of $W_2^2( h_N, h_N^1 )$ and $W_2^2( h_N^2, h )$.} \\ 
We observe that by \eqref{control wasserstein microscopic}
\begin{align} \label{W_2.h_N^1}
    \E \big[W_2^2( h_N,  h_N^1 )\big] \leq \frac 1 N \E[|\xi-\xi^\ast|^2] \lesssim \eps + \phi^2.
\end{align}

Let $\gamma$ be an optimal transport plan for the $(f_N^\ast,f)$, where $f_N^\ast$ is defined as in \eqref{f^ast_N}. Then, we define
\begin{align}
    \bar \gamma(\d x_1,\d x_2,\d \zeta_1, \d \zeta_2, \d \varsigma_1, \d \varsigma_2) &:= \gamma(\d x_1,\d x_2,\d \zeta_1, \d \zeta_2) \nu(x_1,x_2,\zeta_1,\zeta_2)(\d \zeta_1, \d \zeta_2),
\end{align}
where
\begin{align}
    \nu(x_1,x_2,\zeta_1,\zeta_2) :=& \min\Big\{m^+(t,x_1,\zeta_1),m^+(t,x_2,\zeta_2)\Big\} \delta_{+1} \otimes \delta_{+1} \\
    &+ \min\Big\{m^-(t,x_1,\zeta_1),m^-(t,x_2,\zeta_2) \Big\} \delta_{-1} \otimes \delta_{-1} \\
    & + \max\Big\{0, m^+(t,x_1,\zeta_1) - m^+(t,x_2,\zeta_2)\Big\} \delta_{+1} \otimes \delta_{-1} \\
   & + \max\Big\{0,m^-(t,x_1,\zeta_1) - m^-(t,x_2,\zeta_2)\Big\} \delta_{-1} \otimes \delta_{+1}.
\end{align}
Since $m^+ + m^- = 1$, $\bar \gamma$ is a transport plan for $(h_N^2,h)$ and we get
\begin{align} \label{W_2.h_N^2}
\begin{aligned}
    W_2^2(h_N^2,h) &\leq \int_{(\R^3 \times \S^2 \times \spinstate)^2} |x_1 - x_2|^2 + |\zeta_1 - \zeta_2|^2 + |\varsigma_1 - \varsigma_2|^2 \dd \bar \gamma  \\
    &= 
    \int_{(\R^3 \times \S^2)^2} |x_1 - x_2|^2 + |\zeta_1 - \zeta_2|^2 + 4 |m^+(t,x_1,\zeta_1) - m^+(t,x_2,\zeta_2)| \dd \gamma \\
    & \lesssim W_2^2(f_N^\ast,f) + \|\nabla_{x,\zeta} m^+\|_{L^\infty_{t,x,\zeta}} \bigg(\int|x_1 - x_2|^2 + |\zeta_1 - \zeta_2|^2 \dd \gamma \bigg)^{1/2}\\
    & \lesssim  W_2^2(f_N^\ast,f) + W_2(f_N^\ast,f) \lesssim W_2^2(f_N^0,f^0) + W_2(f_N^0,f^0) \lesssim W_2(f_N^0,f^0),     
\end{aligned}
\end{align}
where we used \eqref{W_2.f_N^ast} in the second last estimate and $W_2(f_N^0,f^0) \leq \diam(K \times \S^2) \lesssim 1$ in the final estimate. 
\\[2mm]
\textbf{Substep 2.2:} \emph{Estimate of $W_2^2( h_N^1, h_N^2 )$.} \\
For $\delta > 0$ to be chosen later, we first introduce a covering of $K \times \S^2$ by  essentially disjoint sets $Q \in \mathcal Q$  such that $\diam(Q) \leq \delta$. We recall that  $K$ is as the set in \eqref{K} and observe that we can choose $\mathcal Q$ such that the number of sets needed for the covering satisfies
\begin{align} \label{number.cubes}
    |\mathcal Q| \lesssim \delta^{-5}.
\end{align}
For every cube $Q \in \mathcal{Q}$, 
we introduce the  probability measures
\begin{align*}
    & h_{N,Q}^{1,\pm} := \frac{1}{N_{Q}^{1,\pm}} \sum_{(X_i,\xi_i^0) \in Q} \delta_{(X_i,\xi_i^\ast(t), \pm 1)} \1_{\{\sigma_i = \pm 1\}},\\
    & h_{N,Q}^{2,\pm} := \frac{1}{  N_{Q}^{2,\pm}} \sum_{(X_i,\xi_i^0) \in Q} \delta_{(X_i,\xi^\ast_i(t), \pm 1)} m^\pm(t,X_i,\xi_i^\ast(t)),
\end{align*}
with
\begin{align*}
     N^{1,\pm}_{Q}&:= \# \{(X_i,\xi_i^0) \in Q :   \sigma_i(t) = \pm 1  \},\\
     N^{2,\pm}_{Q}&:= \sum_{(X_i,\xi_i^0) \in Q} m^\pm(t,X_i,\xi_i^\ast(t)),  \\
    N_{Q} &:= \# \{(X_i,\xi_i^0) \in Q\} = N^+_{Q} + N^-_{Q} = N^{+,2}_{Q} +  N^{2,-}_{Q},
\end{align*}
such that for $i=1,2$
\begin{align}
    h_N^i &= \sum_{Q \in \mathcal Q} \bigg(\frac{N_{Q}^{i,+}}{N} h_{N,Q}^{i,+} + \frac{N_{Q}^{i,-}}{N} h_{N,Q}^{i,-}\bigg).
    % \\
    % h_N^2 &= \frac{ N_{Q}^{2,+}}{N}  h_{N,Q}^{2,+} + \frac{N_{Q}^{2,-}}{N}  h_{N,Q}^{2,-}.
\end{align}
These notations allow us to introduce the following transport plan between $h_N^1$ and $ h_N^2$:
% assuming without loss of regularity that $N_{Q}^+ > N_{Q}^{2,+}$ in every $Q$:
\begin{align*}
\gamma:=   \sum_{Q \in \mathcal{Q}}  \Big[ &\tfrac{\min\{ N_{Q}^{1,+},  N_{Q}^{2,+}\}}{N} (h_{N,Q}^{1,+} \otimes  h_{N,Q}^{2,+}) + \tfrac{\min\{N_{Q}^{1,-}, N_{Q}^{2,-}\}}{N} (h_{N,Q}^{1,-} \otimes  h_{N,Q}^{2,-}) \\ &+  \tfrac{\max\{0,N_{Q}^{1,+} - N_{Q}^{2,+}\}}{N} (h_{N,Q}^{1,+} \otimes  h_{N,Q}^{2,-}) + \tfrac{\max\{0,N_{Q}^- - N_{Q}^{2,-}\}}{N} (h_{N,Q}^{1,-} \otimes  h_{N,Q}^{2,+}) \Big].
\end{align*}
In particular, observing that $N_{Q}^{1,+} -N_{Q}^{2,+} = - (N_{Q}^{1,-} - N_{Q}^{2,-})$, we get
\begin{align} \label{W_2.h_N.bar}
    \begin{aligned}
    W_2^2( h_N^1, h_N^2 ) & \leq \int_{(\R^3 \times \S^2 \times \spinstate)^2} |x_1  - x_2|^2 + |\zeta_1 - \zeta_2|^2  + |\varsigma_1 - \varsigma_2|^2 \dd \gamma 
    \\
    &\lesssim \delta^2 + \frac{1}{N}  \sum_{Q \in \mathcal Q} \left| N_{Q}^{1,+} - N_{Q}^{2,+} \right|.
    \end{aligned}
\end{align}
Here we used 
\begin{align} \label{zeta.stability}
   |x_1 - x_2| +  |\zeta_1 - \zeta_2|  \lesssim \delta \quad \text{for all } (x_1,x_2,\zeta_1,\zeta_2,\varsigma_1,\varsigma_2) \in \supp \gamma.
\end{align} 
Indeed, the estimate for $|x_1 - x_2|$ immediately follows from $\diam(Q) \leq \delta$. Moreover, for such $\zeta_1,\zeta_2$ there exist $1 \leq i_1,i_2 \leq N$ such that $|\xi_{i_1}^0 - \xi_{i_2}^0| \leq \delta$ and $\zeta_k= \xi_{i_k}^\ast(t)$ for $k=1,2$. Hence, \eqref{zeta.stability} follows from the stability of the ODE \eqref{equation orientation one particle}.

 We  estimate, recalling the definition of $D_i$ from \eqref{def.D_i} and using $\1_{\sigma_i = 1} = \tfrac 1 2(1 + \sigma_i)$ and Young's inequality
 \begin{align}
     \E \left[\left(N_{Q}^{1,+} - N_{Q}^{2,+}\right)^2\right] &= \E \Bigg[\bigg(\sum_{(X_i,\xi_i^0) \in Q} \1_{\{\sigma_i = +1\}} - m^+(\xi_i^\ast) \bigg)^2 \Bigg]\\
     & \lesssim N_Q + \sum_{(X_i,\xi_i^0) \in Q} \sum_{\substack{{(X_j,\xi_j^0) \in Q}\\{j \neq i}}} (|\Cov(\sigma_i,\sigma_j)| + D_i D_j) \\
     & \lesssim N_Q +   N_Q \sum_{(X_i,\xi_i^0) \in Q} D_i^2 + \sum_{(X_i,\xi_i^0) \in Q} \sum_{\substack{{(X_j,\xi_j^0) \in Q}\\{j \neq i}}} |\Cov(\sigma_i,\sigma_j)|.
     % \\
     % & \lesssim N_Q +    N_Q \sum_{(X_i,\xi_i^0) \in Q} D_i^2 + N_Q^2 e^{\tfrac{-ct}\eps} + N_Q \sum_{(X_i,\xi_i^0) \in Q} \sup_{s \leq t}  \E[|\xi_i - \xi_i^\ast|],
 \end{align}
 % where we used \eqref{est.Cov} in the last estimate.
 Hence,  
 \begin{align} \label{split.E.N_Q} 
 \begin{aligned}
     \frac{1}{N}  \sum_{Q \in \mathcal Q} \E\left[\left| N_{Q}^{1,+} - N_{Q}^{2,+} \right|\right] & \lesssim \frac 1 {N}  \sum_{Q \in \mathcal Q} \left(\E\left[\left| N_{Q}^{1,+} - N_{Q}^{2,+} \right|^2\right] \right)^{1/2} \\
     &\lesssim \frac 1 {N}\sum_{Q \in \mathcal Q}  N_Q^{1/2} 
     % +  \frac 1 {N}\sum_{Q \in \mathcal Q} N_Q e^{\tfrac{-ct}\eps}
     +  \frac 1 {N}\sum_{Q \in \mathcal Q} \bigg(N_Q \sum_{(X_i,\xi_i^0) \in Q} D_i^2\bigg)^{1/2}  \\
     & + \frac 1 {N}\sum_{Q \in \mathcal Q} \bigg( \sum_{(X_i,\xi_i^0) \in Q} \sum_{\substack{{(X_j,\xi_j^0) \in Q}\\{j \neq i}}} |\Cov(\sigma_i,\sigma_j)| \bigg)^{1/2}.
     \end{aligned}
 \end{align}
 Thanks to  \eqref{number.cubes}, and $\sum_Q N_Q = N$, the first right-hand side term can bounded by
\begin{align} \label{sum.N_Q}
    \frac 1 {N}\sum_{Q \in \mathcal Q}  N_Q^{1/2}
    % +  \frac 1 {N}\sum_{Q \in \mathcal Q} N_Q e^{\tfrac{-ct}\eps} 
    \lesssim \frac 1 {N^{1/2} \delta^{5/2}} .
    % + e^{\tfrac{-ct}\eps}
\end{align}
Moreover, by \eqref{control wasserstein microscopic p_eps},
\begin{align}  \label{sum.D_i}
\begin{aligned}
     \frac 1 {N}\sum_{Q \in \mathcal Q} \bigg(N_Q \sum_{(X_i,\xi_i^0) \in Q} D_i^2\bigg)^{1/2} &\lesssim \frac 1 N \bigg(\sum_{Q \in \mathcal Q} N_Q\bigg)^{1/2} \bigg(\sum_{Q \in \mathcal Q} \sum_{(X_i,\xi_i^0) \in Q} D_i^2 \bigg)^{1/2} \\
     &= \frac 1 {\sqrt N} \bigg(\sum_i D_i^2 \bigg)^{1/2} \lesssim e^{\tfrac{-ct}\eps}    + \eps^{1/2} + \phi.
     \end{aligned}
\end{align}
Furthermore, by \eqref{est.Cov} and \eqref{control wasserstein microscopic} we have
\begin{align}
    &\sum_{Q \in \mathcal Q}  N_Q^{-1} \sum_{(X_i,\xi_i^0) \in Q} \sum_{\substack{{(X_j,\xi_j^0) \in Q}\\{j \neq i}}} |\Cov(\sigma_i,\sigma_j)|^2 \\
    &\quad \lesssim  \sum_{Q \in \mathcal Q}  N_Q^{-1} \sum_{(X_i,\xi_i^0) \in Q} \sum_{\substack{{(X_j,\xi_j^0) \in Q}\\{j \neq i}}}  \bigg(e^{\tfrac{-ct}\eps} \\
    &\qquad \qquad +  \frac 1 \eps \int_0^t e^{\tfrac{-c(t-s)}\eps}\Big(\E[|\xi_i(s) - \xi_i^\ast(s)|^2] + \E[|\xi_j(s)- \xi_j^\ast(s)|^2]\Big) \dd s \bigg)\\
    & \quad \lesssim N e^{\tfrac{-ct}\eps} +  \frac 1 \eps \int_0^t e^{\tfrac{-c(t-s)}\eps} \sum_i \E[|\xi_i(s) - \xi_i^\ast(s)|^2]  \dd s  \\
    & \quad \lesssim N e^{\tfrac{-ct}\eps} + N(\eps + \phi^2).
\end{align}
Therefore,
\begin{align} \label{sum.Cov}
\begin{aligned}
   & \frac 1 {N}\sum_{Q \in \mathcal Q} \bigg(\sum_{(X_i,\xi_i^0) \in Q}  \sum_{\substack{{(X_j,\xi_j^0) \in Q}\\{j \neq i}}}  |\Cov(\sigma_i,\sigma_j)| \bigg)^{1/2}\\ 
   &\quad \lesssim \frac 1 {N}\sum_{Q \in \mathcal Q} N_Q^{1/2} \bigg(  \sum_{(X_i,\xi_i^0) \in Q} \sum_{\substack{{(X_j,\xi_j^0) \in Q}\\{j \neq i}}}   |\Cov(\sigma_i,\sigma_j)|^2  \bigg)^{1/4} \\
    & \quad \lesssim \frac 1 {N} \bigg(\sum_{Q \in \mathcal Q}  N_Q \bigg)^{3/4} \bigg(  N_Q^{-1} \sum_{Q \in \mathcal Q} \sum_{(X_i,\xi_i^0) \in Q}  |\Cov(\sigma_i,\sigma_j)|^2  \bigg)^{1/4} \\
    & \quad \lesssim e^{\tfrac{-ct}\eps} + \eps^{1/4}  + \phi^{1/2}.
    \end{aligned}
\end{align}
Inserting estimates \eqref{sum.N_Q}, \eqref{sum.D_i} and \eqref{sum.Cov} into \eqref{split.E.N_Q} yields
\begin{align}
    \frac{1}{N}  \sum_{Q \in \mathcal Q} \E\left[\left| N_{Q}^{1,+} - N_{Q}^{2,+} \right|\right] \lesssim  \frac 1 {N^{1/2} \delta^{5/2}}  +  e^{\tfrac{-ct}\eps} + \eps^{1/4}  + \phi^{1/2}.
\end{align}
In combination with \eqref{W_2.h_N.bar} we deduce
\begin{align}
        \E\left[W_2^2( h_N^1, h_N^2 )\right] & \lesssim \delta^2 +  \frac 1 {N^{1/2} \delta^{5/2}}  +  e^{\tfrac{-ct}\eps} + \eps^{1/4}  + \phi^{1/2}.
\end{align}
Finally, we choose $\delta = N^{-1/9}$ yielding
\begin{align} \label{W_2.h^N_1,2.final}
        \E\left[W_2^2( h_N^1, h_N^2 )\right] & \lesssim N^{-2/9} +  e^{\tfrac{-ct}\eps} + \eps^{1/4}  + \phi^{1/2}.
\end{align}
\\[2mm]
\textbf{Substep 2.3:} \emph{Conclusion.} \\
Inserting estimates \eqref{W_2.h_N^1}, \eqref{W_2.h_N^2} and \eqref{W_2.h^N_1,2.final} into \eqref{split.W_2.h_N} yields \eqref{h_N,h}.\\[2mm]
\textbf{Step 3:} \emph{Proof of \eqref{u_N.u}.} \\
We estimate
\begin{align}
    \|u_N - u\|_{L^p(B_1(x))} \leq \|u_N - \sum_j v_j\|_{L^p(B_1(x))} &+ \|\sum_j (v_j -   \tilde v_j)\|_{L^p(B_1(x))} \\
    &+ \|\sum_j \tilde v_j - u\|_{L^p(B_1(x))} .
\end{align}
Here, the functions $v_j$ and $\tilde v_j$ are defined in \eqref{v_j} and \eqref{tilde.v_j} with $(\bar \xi, \bar \sigma, \bar H) = (\xi(t), \sigma(t), H(t,\cdot))$.
Since $p \in (1,3/2)$, we can combine Lemma \ref{lemma:reflection method} and Sobolev embedding to
\begin{align} \label{sum.v_ju_N}
    \|u_N - \sum_j v_j\|_{L^p(B_1(x))} \lesssim \phi^{3/2}.
\end{align}
Moreover, Lemma \ref{lem:v_j.tilde.v_j} gives
\begin{align} \label{sum.v_j.tilde.v_j}
    \|\sum_j (v_j -   \tilde v_j)\|_{L^p(B_1(x))} \lesssim N r^{1 + 3/p} = \phi r^{3/p - 2}.
\end{align}
We observe that $\tilde u_N := \sum_j \tilde v_j$ satisfies
\begin{align}
\left\{ \begin{array}{rl}
       \begin{aligned}
           -\Delta \tilde u + \nabla \tilde q = &\phi \int_{\S^2 \times \spinstate}   \varsigma  \zeta \cdot \nabla H   h_N(t,x, \d \zeta, \dd \varsigma)  
       \\
       &+ \phi \dv \int_{\S^2 \times \spinstate} \varsigma  (\Id + \mR) \zeta \wedge H h_N(t,x, \d \zeta, \dd \varsigma)  
       \end{aligned} & \quad \text{in } \R^3,\\
        \dv \tilde u = 0 & \quad \text{in } \R^3,
    \end{array}\right.
\end{align}
and that $u$ satisfies 
\begin{align}
\left\{ \begin{array}{rl}
       \begin{aligned}
        -\Delta  u + \nabla \tilde q = &\phi \int_{\S^2 \times \spinstate}   \varsigma  \zeta \cdot \nabla H   h(t,x, \d \zeta, \dd \varsigma) \\
        &+ \phi \dv \int_{\S^2 \times \spinstate} \varsigma  (\Id + \mR) \zeta \wedge H h(t,x, \d \zeta, \dd \varsigma)  \end{aligned} & \quad \text{in } \R^3, \\
        \dv u = 0 & \quad \text{in } \R^3. 
        \end{array}\right.
\end{align}
 Let $g \in L^{p'}$ with $\supp g \subset \overline{B_1(x)}$. Let $\varphi \in \dot H^1(\R^3)$ be the solution to 
\begin{align}
    -   \Delta  \varphi + \nabla  p = g,  \quad \dv  \varphi  = 0 \quad \text{in }  \R^3.
 \end{align}
Let $\gamma \in \mathcal P((\R^3 \times \S^2 \times \spinstate)^2)$ be an optimal plan for $(h_N(t,\cdot),h(t,\cdot))$. Then, 
\begin{align}
    \int_{\R^3} g \cdot (u - \tilde u) = \phi \int_{(\R^3 \times \S^2 \times \spinstate)^2} \Psi(x_1,\zeta_1,\varsigma_1) - \Psi(x_2,\zeta_2,\varsigma_2) \dd \gamma,
\end{align}
where
\begin{align}
    \Psi(x,\zeta,\varsigma) = \varsigma  v(x)  \cdot (  \zeta \cdot \nabla H(x)) - \varsigma \nabla v(x) :  (\Id + \mR(\zeta)) \zeta \wedge  H(x) .
\end{align}
 By regularity theory for the Stokes equation and Morrey's inequality,
\begin{align}
    [\Psi(x,\zeta,\varsigma)]_{1 - 3/p'} \lesssim \|v\|_{C^{1,1 - 3/p'}} \lesssim \|g\|_{L^{p'} \cap L^1} \lesssim \|g\|_{L^{p'}},
\end{align}
where we used $\supp g \subset \overline{B_1(x)}$ in the last estimate.
Hence,
\begin{align}
    \int_{\R^3} g \cdot (u - \tilde u) &\lesssim \phi \|g\|_{L^{p'}} \int_{(\R^3 \times \S^2 \times \spinstate)^2} |(x_1,\zeta_1,\varsigma_1) - (x_2,\zeta_2,\varsigma_2)|^{1 - 3/p'} \dd \gamma \\
    &\lesssim \phi  \|g\|_{L^{p'}} W^{1 - 3/p'}_2(h_N,h) .
\end{align}
Since $g \in L^{p'}$ with $\supp g \subset \overline{B_1(x)}$ was arbitrary, this yields 
\begin{align}
    \|\sum_j \tilde v_j - u\|_{L^p(B_1(x))} = \|\tilde u - u\|_{L^p(B_1(x))} \lesssim \phi W^{1 - 3/p'}_2(h_N,h).
\end{align}
Combining this estimate with \eqref{sum.v_j.tilde.v_j} and \eqref{sum.v_ju_N} yields \eqref{u_N.u}.
\end{proof}

\appendix 

\section{Modeling and nondimensionalization} \label{sec:app}

In this section we explain in more detail the modeling, simplifications and nondimensionalization that leads to the microscopic model \eqref{u_N}--\eqref{ODE.Xi}.
We consider the microscopic system with physical dimensions.

We assume that the particles are single domain  magnets, that is the magnetization is uniform in the particles and equal to the saturation magnetization. This is an idealization of more complex models where the magnetization may vary inside of the particles (see \cite{BedantaPetrcicKleeman15}). We assume that for each particle the  constant magnetization can only take two values, namely in the direction of the so-called easy axis which arises from the crystalline structure of the particles and/or their elongated shape. 

% \paragraph*{Particle and fluid domain}
% We consider $N$ particles $\B_i \Subset \R^3$, $1 \leq i \leq N$. Given a reference particle $\mathcal B$, each particle $\B_i$ is characterized in terms of the common rescaling parameter $r$, its center $X_i \in \R^3$, its orientation $\xi_i \in \S^2$ and its spin $\sigma_i \in \{\pm 1\}$ (all depending on $N$). 

With the notation from Section \ref{subsection Microscopic problem}, we denote by $\check X_i, \check \xi_i, \check \sigma_i$ the particle positions, orientations and (super)spins, and by $\check r$ their common radius. Then,  the space occupied by the $i$-th particle is
\begin{align}
    \check \B_i = \check X_i + \check r R_{\check \xi_i} \B.
\end{align}
% The resulting fluid and particle domain, we denote $\check \Omega_f := \R^3\setminus \bigcup_i \check \B_i$ and the solid domain $\check \Omega_s :=  \bigcup_i \check  \B_i$

Given an external magnetic field $H_{ex}$, the total electric and magnetic field is governed by Maxwell's equations. However, it is typical for ferrofluids to neglect the effect of the electric field, which we do in the following (cf. \cite[Chapter 3.4]{Rosensweig02}). We denote $H_{in}$ the internal magnetic field, which thus obeys
\begin{align}
    \left\{ \begin{array}{rl}
    \dv (H_{in} + M) = 0, &\qquad \text{in } (0,T) \times \Omega, \\
    \curl  H_{in} = 0 &\qquad \text{in } (0,T) \times \Omega,
    \end{array} \right.
\end{align}
where $M$ is the magnetization given as
\begin{align}
    M = m_{s}  \sum_i  \check \sigma_i \check \xi_i \1_{\check \B_i}.
\end{align}
 where $m_s$ is the saturation magnetization of the nanoparticle material.
 
 The magnetic flux density $B$ is related to $\check H = H_{ex} + H_{in}$ via
 \begin{align}
     B = \mu_0 (\check H + M),
 \end{align}
where $\mu_0$ is the vacuum permeability. 
Since there are no free charges or currents, the Lorentz force density is given as 
\begin{align}
    f = \curl M \times B =  B \cdot \nabla M - (\nabla M)^T B,
\end{align}
which is supported on $\cup_i \partial \check \B_i$.
Hence, applying the divergence theorem and using that $\nabla M =0$ in $\check \mB_i$ and $\dv \check H =0$ (this also holds for the external magnetic field, because magnetic monopoles do not exist), the magnetic force and torque acting on the $i$-th particle are
\begin{align}
    F_i &= \int_{\partial \check \mB_i} f  =  \mu_0  \int_{\check \mB_i}\nabla (M \cdot \check H), \\
    T_i &=  \int_{\partial \check B_i} (x - X_i) \times f = \mu_0 \int_{\check \mB_i}  M \times \check H + (x - X_i) \times \nabla( M \cdot \check H).
\end{align}

The fluid velocity $\check u_N$ is given as the unique solution in $\dot H^1(\R^3)$ to  the Stokes equations with no slip boundary conditions at the particles. We assume that the particles are inertialess, i.e. instead of prescribing the particle translational and angular velocities, we require  that the magnetic forces and torques are balanced by the forces and torques caused by the fluid. Hence, 
\begin{align} \label{check.u}
\left\{\begin{array}{l}
    -   \nu \Delta \check u_N + \nabla \check p_N = 0,  \quad \dv \check u_N^{\check \sigma, \check \xi} = 0 \quad \text{in }   \R^3 \setminus \cup_i \check \B_i, \\
     D \check u_N = 0 \quad \text{in }   \cup_i \check \B_i, \\
        \forall i \quad -\int_{\partial \check \B_i} \Sigma_\nu[\check u_N,\check p_N] n \dd x  =  \check \sigma_i \mu_0 m_s \int_{\check \B_i}    \nabla (\check \xi_i \cdot \check H) \dd x,  \\ \forall i \quad 
    -\int_{\partial \check \B_i} (x-X_i)   \times \Sigma_\nu[\check u_N,\check p_N] n  \dd x  =  \check \sigma_{i} \mu_0 m_s   \int_{\check \B_i}    \check \xi_i  \times  \check H +  (x - \check X_i) \times  \nabla ( \check \xi_i \cdot \check H) \dd x.
    \end{array} \right. 
\end{align} 
where  $\nu$ is the fluid viscosity and $\Sigma_\nu[u,p] = 2 \nu D u  - p \Id$  the corresponding stress tensor. Here $D u = \tfrac 1 2 (\nabla u + (\nabla u)^T)$ is  the symmetric gradient of $u$ and it is classical that $\check D u = 0$ in $\mathcal \mB_i$ is equivalent to the existence of $V_i, \Omega_i$ such that $u(x) = V_i +  \Omega_i \times (x - X_i) $ in $\mB_i$. 
The particles' evolution is governed by these rigid body velocities, i.e.
\begin{align}
            \frac{\d}{\d t} \check X_i(t) &= V_i(t) =   \check u_N(t,\check X_i) , \\
              \frac{\d}{\d t} \check \xi_i(t) &= \Omega_i(t) \times \check \xi_i(t) =  \frac 1 2 \curl \check u_N(t,\check X_i) \times  \check \xi_i(t) =\nabla \check u_N(t,\check X_i) \check \xi_i(t).
\end{align}

The spin flips occur with rate
\begin{align} \label{check.lambda}
\begin{aligned}
    \check \lambda_i(t, \check X, \check \xi(t), \check \sigma(t)) \dd t =\frac 1 {\tau_0} \exp\left( \frac {\check r^3 |\mB| K  + \check \sigma_i(t) \mu_0 m_s\int_{\check \mB_i(t)}  \check H(t,x) \cdot \check \xi_i(t) }{k_B \Theta} \right),
    \end{aligned}
\end{align}
where $K$ is the effective anisotropy energy density, $k_B$ is the Boltzmann constant, $\Theta$ the absolute temperature  and $\tau_0$ is the elementary spin flip time (see \cite{BedantaPetrcicKleeman15}).
The spin flip model is an approximation for a continuous time evolution for the magnetization $M_i$ of the $i$-th particle based on the stochastic Landau-Lifschitz-Gilbert equation. We refer to \cite{StochasticFerromagnetism} for the analysis of more complex stochastic ferromagnetism models.

\paragraph{Simplifications}
Note that we neglect thermal noise regarding the particle positions, orientations and the fluid velocity, even though the ferromagnetic nanoparticles are so small that thermal noise is expected to be important regarding the particle orientations. 
Both the effective Brownian motion for the particle orientation and the spin flip are mechanisms that are supposed to result in relaxation phenomena that lead to superparamagnetic behavior: a strong magnetization in an externally applied magnetic field that quickly fades when the external field is removed.
According to \cite{Rosensweig87}, the two relaxation mechanisms are equally important for particles of around $10nm$. 

As a further simplification, we do not take into account the particle translations and we neglect the internal magnetic field, i.e. we assume $\check H= H_{ex}$, i.e. $\check H$ is a given function. In fact, it is believed that the interaction of the particles through the internal magnetic field, is negligible (see e.g. \cite{BedantaPetrcicKleeman15}). From the mathematical point of view, this singular interaction might not be easy to deal with but we do not investigate this aspect in the present work.
For purely notational convenience, we pretend that the magnetic field $\check H$ was constant inside of the particles. This has the advantage that we can replace the integrals involving $\check H$ in \eqref{check.u} and \eqref{check.lambda} and that the second right hand side term in the torque vanishes (assuming a centered reference particle, i.e. $\int_{\mB}x \dd x = 0$).

With these simplifications, our model becomes
\begin{align} \label{check.full}
\left\{\begin{array}{rl}
    -   \nu \Delta \check u_N^{\check \sigma, \check \xi} + \nabla \check p_N^{\check \sigma, \check \xi} = 0,  \quad \dv \check u_N^{\check \sigma, \check \xi} = 0 &\quad \text{in }  \R^3 \setminus \cup_i \check B_i, \\
     D \check u_N^{\check  \sigma, \check  \xi} = 0 &\quad \text{in }   \cup_i \check B_i, \\
        -\int_{\partial \check \B_i} \Sigma_\nu[\check u_N^{\check \sigma, \check \xi},\check p_N^{\check \sigma, \check \xi}] n \dd x  =  \check\sigma_i \mu_0 m_s \check r^3 |\mB|   \nabla \check H(X_i) &\quad \forall i, \\
    -\int_{\partial \check \B_i} (x-\check X_i)   \times \Sigma_\nu[\check u_N^{\check \sigma,\check  \xi},\check p_N^{\check \sigma, \check \xi}] n  \dd x  =  \check \sigma_{i} \mu_0 m_s  \check r^3 |\mB|  \check \xi_i  \times  \check H(X_i) &\quad \forall i, \\
            \check X_i(t) =   X_i(0)  &\quad \forall i, \\
              \frac{\d}{\d t} \check \xi_i(t) =  \frac 1 2 \curl \check u_N^{\check \sigma, \check \xi}(t,X_i) \times  \check \xi_i(t)  &\quad \forall i,\\
    \check \lambda_i(t, \check X, \check \xi(t), \check \sigma(t)) \dd t =  \lambda^{\check \sigma_i(t)}(t, \check X_i,\check \xi_i(t)) \dd t  &\quad \forall i, \\
    \check \lambda^{\pm}(t,x,\zeta) =\frac 1 {\tau_0} \exp\left( \frac {\check r^3 |\mB| (K  \pm  \mu_0 m_s \check H(t,x) \cdot \zeta )}{k_B \Theta} \right).
        \end{array} \right. 
\end{align}

% supplemented by white noise in \cite[Eq.(2.3) and (3.22)]{Brown63} of the form
% \begin{align} \label{SDE.M}
%     \dd M_i = \gamma_0 M_i \times \left(\nabla_M E \dd t - \eta \dd M_i  + \sqrt{2 k_B \Theta \eta /|\B_i| } \dd W\right),
% \end{align}
% for some parameters $\gamma_0,\eta$ and where $E = K \sin^2 \gamma -   M_i\cdot H$ (cf. \cite[Eq. (11.37)]{StoehrSiegmann})\footnote{There seems to be  a discrepancy with \eqref{flip.frequency.external} concerning $|\B|$.} and where $\gamma$ is the angle between $M$ and the easy axis, i.e. $\sin^2 \gamma = 1 - \frac{(M \cdot \xi)^2}{|M|^2}$.

% According to \cite{Brown63}, the jump model is then a good approximation of this description if $(\max E - \min E) |\B_i| / (k_B \Theta) \gg 1$.

\paragraph{Nondimensionalization}

 We non-dimensonalize the above model in terms of a typical length scale $L > 0$ (the size of the cloud of magnetic nanoparticles)  and a timescale $\bar T > 0$ which is chosen such that the typical angular velocity of the particles is of order $1$. The angular velocity of an isolated particle in a fluid on which a torque $T$ acts is (cf. Section \ref{sec:single}) 
 \begin{align}
      |\omega| \sim \frac{|T|}{\nu \check r^3}.
 \end{align}
  With $\bar H > 0$ the typical magnitude of the external magnetic field, we therefore choose
  \begin{align}
      \bar T = \frac{\nu}{\mu_0 m_s \bar H}.
  \end{align}   
  We then define
  \begin{align}
            r = \frac{\check r}{L}, && \mB_i(t) = L \check \mB_i(\bar T t),  &&     \xi_i(t) = \check \xi_i(\bar T t), &&
    X_i(t) = \frac{\check X_i(0)}{L}, &&
    \sigma_i(t) = \check \sigma_i(\bar T t), 
  \end{align}
  % and
  % \begin{align}
  %   \xi_i(t) = \check \xi_i(\bar T t), &&
  %   X_i(t) = \frac{\check X_i(0)}{L}, &&
  %   \sigma_i(t) = \check \sigma_i(\bar T t), 
  %     \end{align}
      as well as
      \begin{align}
      H(t,x) = \frac{\check H(\bar T t, L x)}{\bar H}, && 
            u_N(t,x) = \frac{\bar T}{L} \check u_N(\bar T t, L x),  && p_N(t,x) = \frac{\bar T}{\nu} \check p_N(\bar T t, L x)
  \end{align}
  and
  \begin{align}
      \lambda_i(t,  X, \xi(t),  \sigma(t)) = {\eps} \bar T \check \lambda_i(\bar T t, L \check X, \check \xi(\bar T t), \check \sigma(T t)) , &&  
     \lambda^{\pm}(t,x,\zeta)  =  \eps \bar T \check \lambda^{\pm}(\bar T t,L x,\zeta),
  \end{align}
  with 
  \begin{align}
      \eps = \frac{\tau_0}{\bar T} \exp\left( - \frac {\check r^3 |\mB| K}{k_B \Theta} \right).
  \end{align}
Then, \eqref{check.full} turns into  \eqref{u_N}--\eqref{ODE.Xi} with
\begin{align}
    b = \frac{\check r^3 |\mB| \mu_0 m_s \bar H}{k_B \Theta}.
\end{align}

\paragraph{Typical values}
(In the following $\mathrm s, \mathrm m, \mathrm K, \mathrm A$ refer to the SI units seconds, meter, Kelvin, Ampere.) We consider a fluid with a viscosity $ \nu \approx 10^{-3} \mathrm{kg}/(\mathrm{ms})$, magnetite nanoparticles with $K \approx 10^3 \mathrm J / \mathrm{m^3}$, $m_s \approx 10^6 \mathrm A / \mathrm m$, $\check r \approx 10^{-8} \mathrm m$ which are typical values (see e.g. \cite{Hadadian22}).  We observe that at standard temperatures $\Theta \approx 300 \mathrm K$
we have 
\begin{align}
    \bar T \approx 10^{-3} \frac 1 {\bar H} \frac{\mathrm{A s}}{\mathrm m}.
\end{align}
% which means $\bar T \ll  1 \mathrm s$ for a magnetic field with magnitude $\bar H \gg 10^{-3} \frac{\mathrm A}{\mathrm m}$.
Moreover, 
\begin{align}
    b \approx 10^{-3}  \bar H \frac{\mathrm m}{\mathrm A}.
\end{align}
% which means that the equilibrium average spin $m_0 = \tanh(b H_{ex} \cdot \zeta)$ is quite close to $0$ for $\bar H \ll 10^3 \frac{\mathrm A}{\mathrm m}$.
Finally, 
\begin{align}
    \eps \approx 10^{-6} \bar H \frac{\mathrm m}{\mathrm A} \exp\left( - \check r^3 |\mB| 10^{24}  \mathrm m ^{-3} \right)
\end{align}
(up to some prefactor of order $1$ in the exponential). This means that indeed at around $\check r = 10^{-8}  \mathrm{m}$ (not much dependent on $\bar H$) there is a sharp transition from $\eps \ll 1$ to $\eps \gg 1$.

The earth magnetic field is of order $\bar H \sim 50 \mathrm A/ \mathrm m $. Magnetic fields in applications are typically considerably larger. Therefore, for most practical purposes $\bar T \ll 1 \mathrm s$.

\medskip

These numbers indicates that $\eps \ll \bar T / T_{obs} \ll 1$ with a macroscopic observation time $T_{obs}$ would be an interesting scaling regime. In that case one would ecpect the particles to almost instantaneously align with the external magnetic field. The analysis of this regime lies outside of the scope of the present paper.

 \begin{refcontext}[sorting=nyt]
\printbibliography
 \end{refcontext}

\end{document}